\pgfplotsset{compat=1.16}
\newtheorem{theorem}{Theorem}[section]
\newtheorem{lemma}[theorem]{Lemma}
\newtheorem{remark}[theorem]{Remark}
\newtheorem{assumption}[theorem]{Assumption}
\definecolor{MyDarkGreen}{rgb}{0,0.45,0}
\def\trait #1 #2 #3 {\vrule width #1pt height #2pt depth #3pt}
\def\fin{\hfill
        \trait .3 5 0
        \trait 5 .3 0
        \kern-5pt
        \trait 5 5 -4.7
        \trait 0.3 5 0
\\\smallskip}
\newenvironment{proof}[1][]{\textit{Proof#1.}~}{\fin}
\newcommand{\XVEM}{\text{X-VEM}}
\newcommand{\REAL}{\mathbb{R}}
\newcommand{\xv}{\bm{x}}
\newcommand{\yv}{\bm{y}}
\newcommand{\as}{a}
\newcommand{\fs}{f}
\newcommand{\hs}{h}
\newcommand{\us}{u}
\newcommand{\vs}{v}
\newcommand{\zs}{z}
\newcommand{\calD}{\mathcal{D}}
\newcommand{\calE}{\mathcal{E}}
\newcommand{\calV}{\mathcal{V}}
\newcommand{\PS} [1]{\mathbb{P}_{#1}}
\newcommand{\HONE}  {H^1}
\newcommand{\LTWO}  {L^2}
\newcommand{\HONEzr}{H^1_0}
\newcommand{\LS}[1] {L^{#1}}
\newcommand{\HS}[1] {H^{#1}}
\newcommand{\CS}[1] {C^{#1}}
\newcommand{\TEXTFONTA}[1]{#1} 
\renewcommand{\P} {\TEXTFONTA{E}} 
\newcommand  {\E} {\TEXTFONTA{e}} 
\newcommand  {\V} {\TEXTFONTA{v}} 
\newcommand{\hh}{h}
\newcommand{\Th}{\Omega_{\hh}}
\newcommand{\xvP}{\xv_{\P}}          
\newcommand{\xvE}{\xv_{\E}}          
\newcommand{\xvV}{\xv_{\V}}          
\newcommand{\hP}{\hh_{\P}}
\newcommand{\hE}{\hh_{\E}}
\newcommand{\mP}{\ABS{\P}}
\newcommand{\Eset}{\calE_h}           
\newcommand{\Vset}{\calV_h}           
\newcommand{\dS}{\,ds}
\newcommand{\dx}{\,d\xv}
\newcommand{\nor}  {\bm{n}}
\newcommand{\norP} {\bm{n}_{\P}}
\newcommand{\fsh}{\fs_{\hh}}
\newcommand{\ush}{\us_{\hh}}
\newcommand{\vsh}{{\vs_{\hh}}}
\newcommand{\asP}{\as^{\P}}
\newcommand{\ash}{\as_{\hh}}
\newcommand{\SP} {S^{\P}}
\newcommand{\snorm}[2]{|#2|_{#1}}
\newcommand{\norm}[2]{\|#2\|_{#1}}
\newcommand{\abs}    [1]{|#1|}
\newcommand{\ABS}    [1]{\left|#1\right|}
\newcommand{\SQBRAC}[2][]{\left[#2\right]_{#1}}
\newcommand{\angles}  [2]{\langle#1\rangle_{#2}}
\newcommand{\Hhalfprod}  [2]{\angles{#1}{-\frac12,\frac12,#2}}
\newcommand{\Vspace}{V}
\newcommand{\Vh}[1] {\Vspace^{\hh}_{#1}}
\newcommand{\Vhk}   {\Vh{k}}
\newcommand{\PinP}[1]{\Pi^{\nabla,\P}_{#1}}
\newcommand{\PizP}[1]{\Pi^{0,\P}_{#1}}
\newcommand{\PizE}[1]{\Pi^{0,\E}_{#1}}
\newcommand{\PitEo}[1]{\Pi^{\Psio}_{#1,\E}}
\newcommand{\restrict}[2]{\mbox{$#1$}_{|{#2}}}
\newcommand{\EOD}{

\newcommand{\email}[1]{\href{mailto:#1}{#1}}
\renewcommand\Affilfont{\footnotesize}

\begin{document}

\title{The eXtended Virtual Element Method for elliptic problems with
  weakly singular solutions}

\author[1,2]{J\'er\^ome Droniou\footnote{\email{jerome.droniou@umontpellier.fr, jerome.droniou@monash.edu}}}
\affil[1]{IMAG, Univ. Montpellier, CNRS, Montpellier, France}
\affil[2]{School of Mathematics, Monash University, Clayton, Australia}
\author[3]{Gianmarco Manzini\footnote{\email{gmanzini@lanl.gov}}}
\affil[3]{T-5, Theoretical Division, Los Alamos National Laboratory, Los Alamos, NM, USA}
\author[2]{Liam Yemm\footnote{\email{liam.yemm@monash.edu }}}

\maketitle

  \begin{abstract}
    This paper introduces a novel eXtended virtual element method, an
    extension of the conforming virtual element method. The \XVEM{} is
    formulated by incorporating appropriate enrichment functions in
    the local spaces. The method is designed to handle highly generic
    enrichment functions, including singularities arising from
    fractured domains. By achieving consistency on the enrichment
    space, the method is proven to achieve arbitrary approximation
    orders even in the presence of singular solutions. The paper
    includes a complete convergence analysis under general assumptions on mesh
    regularity, and numerical experiments validating the method's
    accuracy on various mesh families, demonstrating optimal
    convergence rates in the $L^2$- and $H^1$-norms on fractured or L-shaped domains.
    
  \textbf{Key words:}
    X-VEM,
    polytopal method, 
    corner singularities, 
    fractured domains,
    error analysis,
    enriched method    
  \end{abstract}



\section{Introduction}

Introduced by \cite{BeiraodaVeiga-Brezzi-Cangiani-Manzini-Marini-Russo:2013, Ahmad-Alsaedi-Brezzi-Marini-Russo:2013}, the conforming virtual element method (VEM) is a modern numerical technique for the discretisation of partial differential equations. At its core, VEM transcends traditional finite element methods by allowing for a flexible and versatile discretisation strategy, wherein the computational domain is partitioned into polytopes (polygons in 2D, polyhedra in 3D) of arbitrary shapes. The VEM is one of several polytopal methods developed in the last couple of decades or so, which also include Discontinuous Galerkin methods \cite{Di-Pietro.Ern:12,Antonietti.Brezzi.ea:09,Cangiani.Dong.ea:16}, Hybrid Discontinuous Galerkin \cite{Cockburn.Gopalakrishnan.ea:09,Cockburn.Di-Pietro.ea:16}, Hybrid High-Order (HHO) \cite{hho-book,Di-Pietro.Ern.ea:14}, and Weak Galerkin \cite{Mu.Wang.ea:15}, among others. We also note that some of these methods drew inspiration and/or turned out to be arbitrary-order methods of older low-order schemes, in particular from the Mimetic Finite Differences or Finite Volume families \cite{Kuznetsov.Lipnikov.ea:04,Droniou.Eymard.ea:10,Eymard.Gallouet.ea:10,Beirao-da-Veiga.Lipnikov.ea:14,Droniou:14}. VEM’s distinctive feature lies in its use of a ‘virtual space’ for which basis functions are not known explicitly, avoiding the need for predefined shape functions. 

Errors estimates of arbitrary-order methods, including VEM, are typically limited by the regularity of the exact solution. On non-smooth domains (such as regions with non-convex corners or those possessing cracks) it is expected that the exact solution to elliptic problems will contain weak singularities ~\cite{Grisvard:1985}. Singular behaviour can also stem from the problem data, for example, the boundary conditions in the
Motz problem ~\cite{Motz:1947}. This lack of regularity is well documented in the finite element literature and is typically overcome through enriched approximations based on a partition of unity method \cite{melenk.babuska:1996:partition, babuska.melenk:1997:partition}. The extended finite element method \cite{belytschko.black:1999:elastic, moes.dolbow:1999:finite} is one such method, originally designed to handle discontinuities in crack growth models. In particular, by enriching the local spaces with basis functions that are discontinuous across the crack, the method allows for optimal approximation without the need for mesh refinement near the discontinuity. Finite element methods, whether standard or enriched, are restricted to specific meshes (mostly composed of triangles/tetrahedra or quadrangles/hexahedra), which limits their flexibility. For instance, preserving mesh regularity during refinement necessitates a large number of elements and layers, hindering local and adaptive refinements. Similarly, mesh agglomeration (e.g., for multigrid solvers) and mesh cutting (e.g., for domains with interfaces) result in admissible finite element meshes only in very specific, often trivial, cases. For these reasons, polytopal methods have received a lot of attention since the early 2000s.

To blend the benefits of polytopal methods (mesh flexibility) together with enriched finite elements (optimal convergence for solutions with singularities), a  trend in the literature recently arose around designing and analysing \emph{enriched polytopal methods}.
The work of \cite{artioli.mascotto:2021:enrichment} designs an enriched non-conforming virtual element method (NC-VEM) for the Poisson problem targeting singularities arising from the domain geometry (such as corners or cracks). However, the analysis there requires to assume a discrete trace inequality, which depends on the enrichment function. Following this approach, the same authors have defined an enriched NC-VEM for a plane elasticity problem with corner singularities \cite{artioli.mascotto:2023:enriched}. An extended HHO method has been designed for the Poisson problem \cite{yemm:2022:design} targeting general enrichments with (locally) square-integrable Laplacian and Neumann traces and avoids the use of discrete inequalities dependent on the enrichment function. We also make note of the recent work of \cite{yemm:2024:enriched} which designs an extended HHO scheme for the Stokes problem with generic enrichment functions, albeit with limited regularity disallowing singularities arising from non-convex corners. To date, only \cite{benvenuti.chiozzi.ea:2019:extended, benvenuti.chiozzi.ea:2022:extended} have attempted to design an enriched \emph{conforming} virtual element method. However, both these methods only consider the lowest order VEM and lack a complete error analysis, mostly because of an approach based on the continuous enriched virtual space and the lack of inverse inequalities for enriched functions or degrees of freedom. In this paper, an eXtended virtual element method (\XVEM{}) is designed which is valid for highly generic enrichment functions, offers arbitrary approximation orders and is accompanied with a rigorous and complete analysis; to overcome the shortcomings of previous (partial) analyses mentioned above, we adopt a ``fully discrete'' approach closer to the approach used, e.g., for HHO schemes. Moreover, and contrary to the previously mentioned enriched NC-VEM, the method is not specific to harmonic singularities or those arising from cracks or corners in the domain, but is valid for a generic enrichment space satisfying Assumption \ref{assum:regularity} below. This assumption is still valid for singularities arising from fractured domains -- something the extended HHO method fails to capture.

We perform the convergence analysis under suitable but still general assumptions on the mesh regularity, and we derive the estimates for the approximation error. We obtain error estimates that depend solely on the regular component of the continuous solution, both in a discrete $H^1$-like norm based on the degrees of freedom and in the (computable) broken $H^1$-norm using the enriched elliptic reconstruction.
To assess the method's behaviour and confirm the theoretical
expectations, we conduct a set of numerical experiments on several mesh families, including meshes with convex and non-convex elements on the $L$-shaped domain with a corner singularity.
Our results assess the accuracy of the extended virtual element method and demonstrate its optimal rates of convergence in the $\LS{2}$- and $\HS{1}$-norm. As is well-known for singularity-enriched schemes, to avoid ill-conditioning of the system a local enrichment is preferable, through which only the DOFs in a portion of the domain are enriched with the singular function; contrary to the case of HHO enriched methods \cite{yemm:2024:enriched, yemm:2022:design} our analysis does not cover local enrichment, which remains for a future work, but we numerically demonstrate that it does work for the scheme developed here.

While an extension of the method designed in this paper to three-dimensions may be feasible, significant investigation into the design of virtual spaces on element boundaries and appropriate reconstruction operators would be required.

The outline of the paper is as follows.
In Section~\ref{sec:XVEM:formulation}, we introduce the model problem and its extended virtual element approximation (X-VEM).
In Section~\ref{sec3:theory}, we present the convergence
analysis of X-VEM. The exposition is done in dimension 2 for simplicity, but can be generalised to dimension 3 in a straightforward manner. 
In Section~\ref{sec5:numerical} we investigate the method's
performance on suitable numerical experiments.
Finally, in Section~\ref{sec6:conclusions} we offer some concluding remarks.
\subsection{Notation and technicalities}
\label{subsec:notation}

Throughout this paper, we adopt the notation of Sobolev spaces of~\cite{Adams-Fournier:2003}.
Accordingly, we denote the space of square-integrable functions
defined on any open, bounded, connected domain $\calD\subset\REAL^2$
with boundary $\partial\calD$ by $\LTWO(\calD)$, and the Hilbert space
of functions in $\LTWO(\calD)$ with all partial derivatives up to a
positive integer $m$ also in $\LTWO(\calD)$ by $\HS{m}(\calD)$.
The norm of $\LTWO(\calD)$ is written $\norm{\calD}{{\cdot}}$, while the norm and semi-norm in $\HS{m}(\calD)$ are respectively denoted by $\norm{\HS{m}(\calD)}{{\cdot}}$ and $\snorm{\HS{m}(\calD)}{{\cdot}}$; the latter is the sum of $\LTWO$-norms of derivatives of order $m$.

Let $\Omega \subset \REAL^2$ be an open, bounded polygonal domain with boundary $\Gamma$.
The virtual element method is formulated on the mesh family $\big\{\Th\big\}_{h}$, where each mesh $\Th$ is a partition of the computational domain $\Omega$ into non-overlapping polygonal elements $\P$.
A polygonal element $\P$ is a compact subset of $\REAL^2$ with
boundary $\partial\P$, area $\mP$ centre of mass $\xvP$, and diameter $\hP=\sup_{\xv,\yv\in\P}\vert\xv-\yv\vert$.
The mesh elements $\Th$ form a finite cover of $\Omega$ such that $\overline{\Omega}=\cup_{\P\in\Th}\P$ and the mesh size labelling each mesh $\Th$ is defined by $\hh=\max_{\P\in\Th}\hP$.
A mesh edge $\E$ has centre $\xvE$ and length $\hE$; a mesh vertex $\V$ has position vector $\xvV$.
We denote the set of mesh edges by $\calE_{\hh}$ and the set of mesh vertices by $\calV_{\hh}$.
We denote the set of mesh edges by $\calE_{\hh}$ and the set of mesh vertices by $\calV_{\hh}$. 

The following regularity assumption is made on the mesh $\Th$. We note that this assumption allows for arbitrarily small (or numerous) edges.

\begin{assumption}[Regular mesh sequence]\label{assum:star.shaped}	
	There exists a constant \(\varrho>0\) such that every \(\P\in\Th\) is star-shaped with respect to a ball of radius $\varrho\hP$. 
\end{assumption}

For any integer number $\ell\geq0$, we let $\PS{\ell}(\P)$ and
$\PS{\ell}(\E)$ denote the space of polynomials defined on the element $\P$ and the edge $\E$, respectively; $\PS{\ell}(\Th)$ denotes the space of piecewise polynomials of degree $\ell$ on the mesh $\Th$.
%
  
  



\section{The eXtended virtual element method}
\label{sec:XVEM:formulation}

\subsection{Model problem}
\label{sec:model:problem}
We consider the Poisson problem with homogeneous Dirichlet boundary conditions
\begin{subequations}\label{eq:Poisson}
  \begin{align}
    -\Delta\us &= \fs \phantom{0} \text{in}\;\Omega,\label{eq:Poisson:a}\\
    \us        &= 0 \phantom{\fs} \text{on}\;\Gamma,\label{eq:Poisson:b}
  \end{align}
\end{subequations}
for the scalar unknown $\us$.
The extension to non-homogeneous Dirichlet boundary conditions or to different types of boundary conditions such as Neumann or Robin conditions is straightforward.

Let $\HONEzr(\Omega)$ be the subspace of the Sobolev space of
functions $\HONE(\Omega)$ with zero trace on $\Gamma$.
The variational formulation of problem~\eqref{eq:Poisson} reads as: Find $\us\in\HONEzr(\Omega)$ such that
\begin{align}
	\as(\us,\vs) = \int_\Omega \fs \vs \dx \qquad\forall\vs\in\HONEzr(\Omega), \label{eq:pblm:var}
\end{align}
where the bilinear form $\as(\cdot,\cdot)$ is given by
\begin{equation}
  \as(\us,\vs) = \int_\Omega\nabla\us\cdot\nabla\vs \dx.
  \label{eq:bilform}
\end{equation}
The continuity and coercivity of the bilinear form $\as(\cdot,\cdot)$ imply the existence and uniqueness of the solution according to the Lax-Milgram lemma.

\subsection{Enrichment space}

We assume that the exact solution of the variational problem~\eqref{eq:pblm:var} is the sum of two terms
$$
  \us=\usR+\psi,
$$
where $\usR$ is sufficiently smooth and $\psi$ is the (weakly) singular part.
We are interested in developing an extended virtual element method that incorporates $\psi$ in the design of the approximation space, in such a way that the approximation properties of the scheme only depend on the regular component $\usR$.
The singular component $\psi$ is assumed to be an element of a finite dimensional \emph{enrichment space} $\Psi(\Omega)$ satisfying the following regularity requirements.

\begin{assumption}[Regularity of the enrichment space]\label{assum:regularity}
    ${}$
    \begin{enumerate}
        \item $\Psi(\Omega) \subset \HONE(\Omega)\cap \CS{0}(\overline{\Omega})$. 
        \item $\Delta \Psi(\P) \subset \LTWO(\P)$ for all $\P\in\Th$.
    \end{enumerate}
\end{assumption}
Here and in the following, if $U\subset \overline{\Omega}$ we set $\Psi(U):=\{w|_U\,:\,w\in\Psi(\Omega)\}$.

\begin{remark}[Conforming enrichment space and local enrichment]\label{rem:conforming.psi}
	The requirement that the enrichment space is globally conforming is due to the fact that the analysis is based on splitting the \XVEM{} interpolant into a singular component and a regular VEM interpolant (see Theorem \ref{thm:energy.error} below). If the enrichment space is not contained in $H^1(\Omega)$, then by writing $u = \usR + \psi\in H^1(\Omega)$ it cannot hold that $\usR\in H^1(\Omega)$ and thus its regular VEM interpolant is not defined. This has the major drawback of not allowing for enrichment spaces defined piece-wise -- a typical method of local enrichment, see e.g. \cite{yemm:2022:design} in the context of HHO methods. However, numerical tests suggest that locally enriching in this manner yields valid results (see Section \ref{sec5:numerical}).
	
	An alternative, and perhaps more natural way to define the \XVEM{} interpolant, is as the element of the \XVEM{} space with the same degrees of freedom as the function being interpolated. This would no longer require splitting a function into its regular and singular components and interpolating them individually, thus allowing for the weaker assumption $\Psi(\P) \subset \HONE(\P)\cap \CS{0}(\P)$ for all $\P\in\Th$. However, proving optimal approximation properties of an interpolant defined in this manner seems difficult; see Remark \ref{rem:alternate.interpolator}.
\end{remark}

\subsection{\XVEM{} spaces and elliptic projector}

Consider an integer $k \ge 1$ and set $l = \max\{0, k - 2\}$. The following extended polynomial spaces are defined on each element $\P\in\Th$,
\begin{equation*}
     \PSt{k}(\P) := \PS{k}(\P) + \Psi(\P),\qquad \PSD{l}(\P) := \PS{l}(\P) + \Delta\Psi(\P),
\end{equation*}
and the following space is defined on each edge $\E\in\Eset$,
\begin{equation*}
     \PSt{k}(\E) := \PS{k}(\E) + \Psi(\E).
\end{equation*}                                
The local \XVEM{} space is defined as
\begin{align*}
  \Vhkt(\P) :=
  \big\{\,
  \vsh\in\HS{1}(\P)\,:\,
  &
  \Delta\vsh\in\PSD{l}(\P),\,\,
  \restrict{\vsh}{\partial\P}\in\CS{0}(\partial\P),\,\,
  \restrict{\vsh}{\E}\in\PSt{k}(\E)\,\,\forall\E\subset\partial\P\big\}.
 \label{eq:XVEM:space}
\end{align*}
We note that $\Vhkt(\P)$ contains both the regular local VEM space of \cite{BeiraodaVeiga-Brezzi-Cangiani-Manzini-Marini-Russo:2013} and the extended polynomial space $\PSt{k}(\P)$. For each $\E\in\Eset$, we denote by $\Psio_\E$ the $L^2$-orthogonal complement of $\PS{k}(\E)$ in $\PSt{k}(\E)$, so that $\PSt{k}(E)=\PS{k}(\E)\oplus\Psio_\E$, and we set $\PSto{k-2}(\E)=\PS{k-2}(\E)\oplus\Psio_\E$. Heuristically, $\Psio_\E$ represents the ``most singular'' component of $\PSt{k}(\E)$, obtained by only keeping the part of $\Psi(\E)$ that is orthogonal to $\PS{k}(\E)$ (that is, the part of $\Psi(\E)$ which is the furthest away from polynomials of degree $\le k$).

The $L^2$-orthogonal projectors onto each of the spaces $\PSD{l}(\P)$ and $\PSto{k-2}(\E)$ are denoted by $\PiDP{l}$ and $\PitEo{k-2}$, respectively.
We show in Lemmas \ref{lem:boundary.unisolvence} and \ref{lem:unisolvence} below that each $\vsht \in \Vhkt(\P)$ is uniquely characterised by the following degrees of freedom:
\begin{itemize}
    \item[]\DOFS{D1} the values of $\vsht$ at the vertices of $\P$;  
    \item[]\DOFS{D2} the $L^2$-orthogonal projection $\PitEo{k-2}\vsht$ of $\vsht$ onto the space $\PSto{k-2}(\E)$ for each edge $\E\subset\partial\P$;
    \item[]\DOFS{D3} the $L^2$-orthogonal projection $\PiDP{l}\vsht$ of $\vsht$ onto the space $\PSD{l}(\P)$.
\end{itemize}

\begin{lemma}[Unisolvence of boundary values]\label{lem:boundary.unisolvence}
	The degrees of freedom \DOFS{D1}--\DOFS{D2} are unisolvent for the trace space of $\Vhkt(\P)$ on $\partial \P$.
\end{lemma}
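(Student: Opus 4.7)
The plan is to proceed by (a) showing the number of degrees of freedom in \DOFS{D1}--\DOFS{D2} equals the dimension of the trace space of $\Vhkt(\P)$ on $\partial\P$, and (b) proving injectivity of the DOF map, i.e.\ that a trace whose DOFs all vanish must be identically zero on $\partial\P$.

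For the dimension count, denote by $n_\P$ the number of edges (equivalently, vertices) of $\P$. By definition $\PSt{k}(\E)=\PS{k}(\E)\oplus \Psio_\E$, so $\dim \PSt{k}(\E) = (k+1) + \dim \Psio_\E$. The trace space consists of functions that are continuous on $\partial\P$ and edgewise in $\PSt{k}(\E)$, which after imposing the $n_\P$ vertex continuity constraints has dimension
\begin{equation*}
\sum_{\E\subset\partial\P}\bigl[(k+1)+\dim\Psio_\E\bigr] - n_\P = n_\P k + \sum_{\E\subset\partial\P}\dim\Psio_\E.
\end{equation*}
On the other hand, \DOFS{D1} provides $n_\P$ values, and \DOFS{D2} provides $\dim \PSto{k-2}(\E) = (k-1) + \dim\Psio_\E$ values per edge; summing yields exactly $n_\P k + \sum_{\E}\dim\Psio_\E$ degrees of freedom, matching the dimension of the trace space.

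For injectivity, suppose $\vsht$ is a trace with vanishing DOFs. Fix an edge $\E$ and decompose $\vsht|_{\E} = \ps_\E + \psi_\E^\perp$ with $\ps_\E\in\PS{k}(\E)$ and $\psi_\E^\perp\in\Psio_\E$. Since $\Psio_\E$ is by construction $L^2$-orthogonal to $\PS{k}(\E)\supset\PS{k-2}(\E)$, the $L^2$-projection onto $\PSto{k-2}(\E)=\PS{k-2}(\E)\oplus\Psio_\E$ splits as
\begin{equation*}
\PitEo{k-2}\vsht|_\E \;=\; \PizE{k-2}\ps_\E + \psi_\E^\perp.
\end{equation*}
Vanishing of \DOFS{D2} forces both $\psi_\E^\perp = 0$ and $\PizE{k-2}\ps_\E = 0$. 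Thus $\vsht|_\E = \ps_\E$ is a polynomial of degree $\le k$ which is $L^2(\E)$-orthogonal to $\PS{k-2}(\E)$ and, by \DOFS{D1}, vanishes at both endpoints of $\E$. Parameterising $\E$ over $[\xs_0,\xs_1]$, we may write $\ps_\E(\xs) = (\xs-\xs_0)(\xs-\xs_1)\qs(\xs)$ for some $\qs\in\PS{k-2}(\E)$; testing the orthogonality relation against $\qs$ yields $\int_\E (\xs-\xs_0)(\xs-\xs_1)\qs^2\,d\ss = 0$, and since the weight has constant sign on $\E$ and does not vanish in its interior, this forces $\qs\equiv 0$, hence $\ps_\E\equiv 0$. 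This holds on every edge, so $\vsht\equiv 0$ on $\partial\P$, completing the proof.

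The main (mild) obstacle is getting the decomposition step right: one must exploit that $\Psio_\E$ is the $L^2$-orthogonal complement of $\PS{k}(\E)$ (and not just of $\PS{k-2}(\E)$) inside $\PSt{k}(\E)$, which is precisely what ensures the two components of $\vsht|_\E$ are independently detected by \DOFS{D2}. Once this is clear, the remainder of the argument reduces to the classical unisolvence statement for scalar polynomials on an interval.
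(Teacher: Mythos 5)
Your proof is correct, but it follows a genuinely different route from the paper's. The paper establishes the bijection between boundary traces and the values \DOFS{D1}--\DOFS{D2} directly in both directions: it shows uniqueness by explicitly reconstructing, edge by edge, the component $z\in\Psio_\E$ and then the polynomial component $q\in\PS{k}(\E)$ from the data (citing the classical unisolvence of endpoint values plus $\PS{k-2}(\E)$-moments for $\PS{k}(\E)$), and it shows existence by explicitly building a continuous function on $\partial\P$ with prescribed DOF values. You instead replace the existence half by a dimension count and prove only that the DOF map has trivial kernel, concluding by linear algebra; your kernel argument uses the same key orthogonality observation as the paper (that $\PitEo{k-2}$ splits into $\PizE{k-2}$ on the polynomial part plus the identity on $\Psio_\E$, precisely because $\Psio_\E$ is orthogonal to all of $\PS{k}(\E)$), but you then spell out the classical factorisation argument $(\xs-\xs_0)(\xs-\xs_1)\qs$ rather than invoking the standard unisolvence statement. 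Both approaches are sound; the paper's buys an explicit reconstruction of the trace from the degrees of freedom (which is what is actually used when computing the elliptic projector from \DOFS{D1}--\DOFS{D2}), while yours is shorter on the existence side. One small point to make airtight in your dimension count: the claim that imposing continuity at the vertices reduces the dimension of $\bigoplus_{\E\subset\partial\P}\PSt{k}(\E)$ by exactly the number of vertices requires the vertex-jump constraints to be linearly independent; this is immediate (for each vertex take the function supported on one adjacent edge, affine, equal to $1$ at that vertex and $0$ at the other endpoint), but it deserves a sentence since the whole argument hinges on the count being exact.
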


\begin{proof}
We have to show that, for any $\vsht\in\Vhkt(\P)$, the degrees of freedom \DOFS{D1}--\DOFS{D2} entirely determine any $\vsht|_{\partial \P}$, and that for any choice of the values \DOFS{D1}--\DOFS{D2}, we can find $w\in C^0(\partial\P)$ with these degrees of freedom such that $w|_\E\in\PSt{k}(\E)$ for each $\E\subset\partial \P$.

\medskip

Let $\vsht\in\Vhkt(\P)$. To show that its boundary value is entirely determined by \DOFS{D1}--\DOFS{D2}, we only have to show that, for each $\E\in\Eset$, $v_\E:=\vsht|_\E$ is uniquely determined by its values at the endpoints $\partial\E$ of $\E$ and by $\PitEo{k-2}v_\E$.
Since $v_\E\in\PSt{k}(\E)=\PS{k}(\E)\oplus \Psio_\E$ we can write $v_\E=q+z$ with $q\in\PS{k}(\E)$ and $z\in\Psio_\E$. We first note that, by orthogonality of $\PS{k}(\E)$ and $\Psio_\E$, $\PitEo{k-2}v_\E=\PizE{k-2}q+z$, where $\PizE{k-2}$ is the $L^2$-orthogonal projection on $\PS{k-2}(\E)$. Hence, letting $\Pi^{0,\Psio}_\E$ be the $L^2$-orthogonal projection on $\Psio_\E$, we have
\begin{equation}\label{eq:v.bdry.z}
\Pi^{0,\Psio}_\E(\PitEo{k-2}v_\E)=\Pi^{0,\Psio}_\E(\PizE{k-2}q+z)=z,
\end{equation}
where the conclusion follows from the orthogonality of $\PS{k-2}(\E)$ and $\Psio_\E$.

We then recall that $v_\E=q+z$ to write
\begin{equation}\label{eq:v.bdry.q.1}
\PizE{k-2}q=\PizE{k-2}(v_\E-z)=\PizE{k-2}(\PitEo{k-2}v_\E),
\end{equation}
where the second equality is obtained using $\PS{k-2}(\E)\subset\PSto{k-2}(\E)$ (which ensures that $\PizE{k-2}=\PizE{k-2}\circ\PitEo{k-2}$) and the orthogonality of $z$ and $\PS{k-2}(\E)$.
Moreover
\begin{equation}\label{eq:v.bdry.q.2}
q(a)=v(a)-z(a)\quad \forall a\in\partial\E.
\end{equation}
The relation \eqref{eq:v.bdry.z} shows that 
\begin{equation}\label{eq:fix.z}
  \text{$z$ is uniquely determined by $\PitEo{k-2}v_\E$.}
\end{equation}
Since $q\in\PS{k}(\E)$, it is uniquely determined by $(q(a))_{a\in\partial \E}$ and $\PizE{k-2}q$, that is, according to \eqref{eq:v.bdry.q.1}--\eqref{eq:v.bdry.q.2}, by $(v(a)-z(a))_{a\in\partial\E}$ and $\PitEo{k-2}v_\E$; by \eqref{eq:fix.z}, we infer that  \begin{equation}\label{eq:fix.q}
  \text{$q$ is uniquely determined by $(v(a))_{a\in\partial\E}$ and $\PitEo{k-2}v_\E$.}
\end{equation}
Combining \eqref{eq:fix.z} and \eqref{eq:fix.q} shows that $v_\E=q+z$ is uniquely determined by $(v(a))_{a\in\partial\E}$ and $\PitEo{k-2}v_\E$, which is what we needed to prove.

\medskip

Conversely, take $(m_a)_{a\in\Vset\subset\partial\P}$ and $(r_\E)_{\E\in\Eset\cap\partial\P}$ values for \DOFS{D1}--\DOFS{D2} (each $m_a$ belongs to $\REAL$, each $r_\E$ belongs to $\PSto{k-2}(\E)$). Define $w:\partial\P\to \REAL$ the following way; for each $\E\subset\partial\P$, set $z_\E=\Pi^{0,\Psio}_\E r_\E\in\Psio_\E$, define $q_\E\in\PS{k-2}(\E)$ such that $\PizE{k-2}q_\E=\PizE{k-2}r_\E$ and $q_\E(a)=m_a-z_\E(a)$ for each $a\in\partial \E$, and set $w|_\E=q_\E+z_\E$. Then it can easily be checked that $w\in C^0(\partial \P)$ since its value at each vertex $a\in\partial\P$ is $m_a$ (from either side of $a$), and the arguments developed above to establish \eqref{eq:v.bdry.z}--\eqref{eq:v.bdry.q.2} show that its degrees of freedom \DOFS{D1}--\DOFS{D2} match the chosen values.
\end{proof}

\begin{remark}[Choice of boundary degrees of freedom]
A seemingly more natural choice for \DOFS{D2} would be to consider the projection of $\vsht$ on $\PS{k-2}(\E)+\Psi(\E)$ for each $\E\subset \partial\P$. However, such a choice does not seem to ensure the unisolvence stated in Lemma \ref{lem:boundary.unisolvence}.
\end{remark}

\begin{remark}[Computation of the boundary degrees of freedom]\label{rem:bdrDOF}
A basis for the space $\Psio_\E$ can be generated by considering a canonical basis for $\PS{k}(\E) + \Psi(\E)$, orthonormalising, and retrieving the degrees of freedom corresponding to $\Psio_\E$. Consequently, a basis for the space $\PSto{k-2}(\E)$ follows. This procedure provides orthonormal bases of local spaces, which are known to positively impact the conditioning of the global algebraic system; see, e.g., the analysis in \cite{cond_hho} for the HHO method.
\end{remark}

\begin{lemma}[Unisolvence]\label{lem:unisolvence}
  For all elements $\P\in\Th$, the values provided by the continuous linear functionals  \DOFS{D1}, \DOFS{D2}, \DOFS{D3} are unisolvent in the virtual element space $\Vhkt(\P)$.
\end{lemma}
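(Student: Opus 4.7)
My plan is to follow the standard VEM unisolvence strategy in two stages: first check that the total number of DOFs equals $\dim \Vhkt(\P)$, and then show that a function in $\Vhkt(\P)$ whose DOFs all vanish must itself vanish. Together these two facts yield bijectivity of the DOF map, which is the required unisolvence.

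For the dimension count, I observe that any $\vsht\in\Vhkt(\P)$ is uniquely determined by the pair $(\vsht|_{\partial\P},\Delta\vsht)$, thanks to well-posedness of the Dirichlet problem for $-\Delta$ on $\P$ with $H^{1/2}(\partial\P)$ boundary data and $L^2(\P)$ source (the source being in $L^2$ thanks to Assumption \ref{assum:regularity}(ii), which ensures $\PSD{l}(\P)\subset L^2(\P)$). Consequently
$$
\dim \Vhkt(\P) = \dim(\text{trace space of }\Vhkt(\P)\text{ on }\partial\P) + \dim \PSD{l}(\P).
$$
Lemma \ref{lem:boundary.unisolvence} already provides the dimension of the trace space as $\#(\Vset\cap\partial\P) + \sum_{\E\subset\partial\P}\dim \PSto{k-2}(\E)$, which exactly matches the count of \DOFS{D1}--\DOFS{D2}, while \DOFS{D3} accounts for $\dim \PSD{l}(\P)$.

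For injectivity, let $\vsht\in\Vhkt(\P)$ have all DOFs equal to zero. Vanishing \DOFS{D1}--\DOFS{D2} together with Lemma \ref{lem:boundary.unisolvence} force $\vsht|_{\partial\P}=0$. Integration by parts and the fact that $\Delta\vsht\in\PSD{l}(\P)$ then give
$$
\int_\P |\nabla\vsht|^2\dx = -\int_\P (\Delta\vsht)\,\vsht \dx = -\int_\P (\Delta\vsht)\,\PiDP{l}\vsht \dx = 0,
$$
since $\PiDP{l}\vsht=0$ by \DOFS{D3}. Hence $\vsht$ is constant on $\P$, and with zero boundary trace it must vanish identically.

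The main obstacle I anticipate is not in either step in isolation, which are routine, but in making sure the Dirichlet problem used for the dimension count is genuinely well-posed in the extended setting. This reduces to checking that the elements of $\PSD{l}(\P)=\PS{l}(\P)+\Delta\Psi(\P)$ are admissible right-hand sides for the classical $H^1$-theory; Assumption \ref{assum:regularity}(ii) provides exactly this, so no additional machinery beyond standard elliptic existence/uniqueness is needed. Once this is observed, the enrichment is handled transparently by the projectors $\PiDP{l}$ and $\PitEo{k-2}$, and the argument reduces to the classical VEM proof.
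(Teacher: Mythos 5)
Your proposal is correct and follows essentially the same route as the paper: a dimension count via the isomorphism $\vsht\mapsto(\vsht|_{\partial\P},\Delta\vsht)$ between $\Vhkt(\P)$ and (trace space)$\,\times\,\PSD{l}(\P)$ using Lemma \ref{lem:boundary.unisolvence} and well-posedness of the Dirichlet problem, followed by the kernel argument combining vanishing boundary values with $\PiDP{l}\vsht=0$ and integration by parts. Your explicit remark that Assumption \ref{assum:regularity}(ii) guarantees $\PSD{l}(\P)\subset\LTWO(\P)$, so the Dirichlet problem is well-posed, is a harmless clarification that the paper leaves implicit.
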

\begin{proof}  
Let $\Vhkt(\partial\P)$ be the trace space on $\partial\P$ of $\Vhkt(\P)$. The mapping
\begin{align*}
\Vhkt(\partial\P)\times \PSD{l}(\P)\mapsto{}&\Vhkt(\P)\\
(w,r)\rightarrow {}&\vsht\mbox{ such that } -\Delta\vsht=r\mbox{ in $\P$ and } \vsht|_{\partial\P}=w
\end{align*}
is an isomorphism, by definition of $\Vhkt(\P)$ and the well-posedness of the Poisson problem. Hence, the dimension of $\Vhkt(\P)$ is equal to the dimension of $\Vhkt(\partial\P)\times \PSD{l}(\P)$ which, by Lemma \ref{lem:boundary.unisolvence}, is identical to the number of degrees of freedom \DOFS{D1}--\DOFS{D3}. 

Therefore, it remains to prove that if all DOFs of $\vsht\in\Vhkt(\P)$ vanish, then $\vsht = 0$. By Lemma \ref{lem:boundary.unisolvence}, we already know that if \DOFS{D1} and \DOFS{D2} vanish, then $\vsht = 0$ on $\partial \P$. Therefore, an integration by parts reveals
  \[
    \int_{\P}\nabla \vsht \cdot \nabla \vsht \dx = -\int_{\P}\vsht \cdot \Delta \vsht \dx = -\int_{\P}\PiDP{l}\vsht \cdot \Delta \vsht \dx = 0,
  \]
  where the last equality follows from $\PiDP{l} \vsht=0$ as a result of \DOFS{D3} vanishing. Therefore, $\vsht = \textrm{const}$, and this constant must be zero due to $\vsht = 0$ on $\partial \P$.
\end{proof}

The \emph{extended elliptic projector} $\PinPt{k}:\Vhkt(\P)\to\PSt{k}(\P)$ projects the extended VEM space
$\Vhkt(\P)$ onto the extended polynomial space $\PSt{k}(\P)$.
The projection $\PinPt{k}\vsht$ is the solution to the variational problem:
\begin{align}
  \int_{\P}\nabla\PinPt{k}\vsht\cdot\nabla\qst\dx &= \int_{\P}\nabla\vsht\cdot\nabla\qst\dx\quad\qst\in\PSt{k}(\P),\label{eq:PinPt:A}\\[0.5em]
  \int_{\P}\PinPt{k}\vsht\dx &= \int_{\P}\vsht\dx.\label{eq:PinPt:B}
\end{align}
Clearly, $\PinPt{k}\qst=\qst$ for each $\qst\in\PSt{k}(\P)$.
For all mesh elements $\P$ and all virtual element functions $\vsht\in\Vhkt(\P)$, the extended projection $\PinPt{k}\vsht$ is computable from the DOFs as stated and proved in the following lemma.
\begin{lemma}[Computability of the elliptic projector]
  Let $\P$ be an element of $\Th$ and $\vsht\in\Vhkt(\P)$. Then, the extended elliptic projection
  $\PinPt{k}\vsht\in\PSt{k}(\P)$ is computable using only the degrees of freedom \DOFS{D1}--\DOFS{D3} of $\vsht$.
\end{lemma}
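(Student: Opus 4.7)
The plan is the usual VEM computability argument, adapted to the extended setting; the key observation is that taking derivatives of test functions in $\PSt{k}(\P)$ lands in the projection spaces associated with the DOFs. Given $\qst\in\PSt{k}(\P)$, an integration by parts in the right-hand side of \eqref{eq:PinPt:A} yields
\[
\int_{\P}\nabla\vsht\cdot\nabla\qst\dx
= -\int_{\P}\vsht\,\Delta\qst\dx + \int_{\partial\P}\vsht\,(\nabla\qst\cdot\norP)\dS.
\]
It suffices to show that each of the two terms on the right-hand side is computable from \DOFS{D1}--\DOFS{D3}, together with the normalisation \eqref{eq:PinPt:B}.

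For the volume term, write $\qst=p+\phi$ with $p\in\PS{k}(\P)$ and $\phi\in\Psi(\P)$. Then $\Delta\qst=\Delta p+\Delta\phi$, with $\Delta p\in\PS{k-2}(\P)\subset\PS{l}(\P)$ (where, if $k=1$, the first inclusion is trivial since $\Delta p=0$) and $\Delta\phi\in\Delta\Psi(\P)$; hence $\Delta\qst\in\PSD{l}(\P)$. The definition of $\PiDP{l}$ therefore gives
\[
\int_{\P}\vsht\,\Delta\qst\dx = \int_{\P}\PiDP{l}\vsht\cdot\Delta\qst\dx,
\]
which is computable since $\Delta\qst$ is explicitly known and $\PiDP{l}\vsht$ is provided by \DOFS{D3}.

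For the boundary term, Lemma \ref{lem:boundary.unisolvence} ensures that $\vsht|_{\partial\P}$ is entirely and explicitly determined by \DOFS{D1}--\DOFS{D2}; the reconstruction procedure described after \eqref{eq:v.bdry.q.2} gives, edge by edge, $\vsht|_\E\in\PSt{k}(\E)$, and $\nabla\qst\cdot\norP$ is explicitly known from $\qst$, so the boundary integral $\int_{\partial\P}\vsht(\nabla\qst\cdot\norP)\dS$ is computable. This shows that the right-hand side of \eqref{eq:PinPt:A} can be evaluated from the DOFs, which determines $\nabla\PinPt{k}\vsht$ uniquely in $\PSt{k}(\P)$ up to an additive constant.

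Finally, that constant is fixed by \eqref{eq:PinPt:B}. Since $l\geq 0$, constants belong to $\PS{l}(\P)\subset\PSD{l}(\P)$, so
\[
\int_{\P}\vsht\dx = \int_{\P}\PiDP{l}\vsht\dx,
\]
which is again computable from \DOFS{D3}. The main (minor) subtlety is ensuring that $\Delta\qst$ falls inside $\PSD{l}(\P)$ in all cases, in particular the edge case $k=1$, which follows from the inclusion $\Delta\Psi(\P)\subset\PSD{l}(\P)$ built into the definition of $\PSD{l}(\P)$.
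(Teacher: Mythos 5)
Your proof is correct and follows essentially the same route as the paper's: integrate by parts, observe $\Delta\PSt{k}(\P)\subset\PSD{l}(\P)$ so the volume term is computable from \DOFS{D3}, invoke Lemma \ref{lem:boundary.unisolvence} to recover $\vsht|_{\partial\P}$ from \DOFS{D1}--\DOFS{D2} for the boundary term, and fix the constant via \eqref{eq:PinPt:B} using that constants lie in $\PSD{l}(\P)$. The only (cosmetic) difference is that the paper writes the boundary term as an $\HS{-1/2}(\partial\P)$--$\HS{1/2}(\partial\P)$ duality pairing rather than a plain integral, since $\nabla\qst\cdot\norP$ need not belong to $\LTWO(\partial\P)$ for singular enrichment functions.
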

\begin{proof}
  It holds, by an integration by parts, that for all $\qst\in\PSt{k}(\P)$
  \begin{align}
    \int_{\P}\nabla\PinPt{k}\vsht\cdot\nabla\qst\dx
    ={}& -\int_{\P}\vsht\Delta\qst\dx + \Hhalfprod{\norP\cdot\nabla\qst,\vsht}{\partial\P} 
    \label{eq:PinPt:computability}
  \end{align}
  where $\Hhalfprod{\cdot,\cdot}{\partial\P}$ denotes the duality product between $\HS{-1/2}(\partial \P)$ and $\HS{1/2}(\partial \P)$.
  The moments of $\vsht$ against $\Delta\qst\in\PSD{l}(\P)$ are known from \DOFS{D3} and $\vsht$ is known entirely on the boundary $\partial\P$ from \DOFS{D1}--\DOFS{D2} (see Lemma \ref {lem:boundary.unisolvence}.
  Therefore, all the terms in the right-hand side of~\eqref{eq:PinPt:computability} are computable, and so is $\nabla\PinPt{k}\vsht$. Since $\PSD{l}(\P)$ contains the constant functions, the integral of $\vsht$ over $\P$ (and thus that of $\PinPt{k}\vsht$) is computable from \DOFS{D3}. The entire elliptic projector is therefore computable from the DOFs.
\end{proof}

The global \XVEM{} space is constructed by patching the local spaces:
\begin{align*}
	\Vhkt :=
	\Big\{\,
	\vsh\in\HS{1}(\Omega)\,:\,
	\restrict{\vsh}{\P}\in\Vhkt(\P)
	\quad\forall\P\in\Th
	\,\Big\}.
\end{align*}
Each $\vsht\in\Vhkt$ is uniquely defined by the following degrees of freedom:
\begin{itemize}
	\item[]\DOFS{D1} the values of $\vsht$ at each vertex of $\Vset$;
	\item[]\DOFS{D2} the $L^2$-orthogonal projection $\PitEo{k-2}\vsht$ of $\vsht$ onto the space $\PSto{k-2}(\E)$ for each edge $\E\in\Eset$;
	\item[]\DOFS{D3} the $L^2$-orthogonal projection $\PiDP{l}\vsht$ of $\vsht$ onto the space $\PSD{l}(\P)$ for each element $\P\in\Th$.
\end{itemize}

\subsection{Formulation of the scheme and main results}

At this point, the construction of the \XVEM{} is straightforward and follows the usual procedural steps
\cite{BeiraodaVeiga-Brezzi-Cangiani-Manzini-Marini-Russo:2013}.
We define the discrete bilinear form $\ash:\Vhkt\times\Vhkt\to\REAL$ by assembling all elemental contributions
\begin{equation}\label{eq:aht.def}
    \ash(\usht,\vsht) := \sum_{\P\in\Th}\SQBRAC{\asP(\PinPt{k}\usht,\PinPt{k}\vsht) + \SP(\usht,\vsht)},
\end{equation}
where
\begin{align*}
  \asP(\us,\vs) = \int_{\P}\nabla\us\cdot\nabla\vs\dx
\end{align*}
and the stabilisation term $\SP(\cdot,\cdot):\Vhkt(\P)\times\Vhkt(\P)\to\REAL$ can be any computable (from the DOFs), symmetric, positive semi-definite bilinear form satisfying:
\begin{itemize}
\item \emph{Coercivity and boundedness on}
  $\textrm{ker}(\PinPt{k})$. For all
  $\vsh\in\Vhkt(\P)\cap\textrm{ker}(\PinPt{k})$:
  \begin{equation}\label{eq:stab.norm.equivalence}
    \hP^{-2}\norm{\P}{\PiDP{l}\vsht}^2 + \hP^{-1}\norm{\partial\P}{\vsht}^2 \lesssim \SP(\vsht,\vsht) \lesssim \hP^{-2}\norm{\P}{\PiDP{l}\vsht}^2 + \hP^{-1}\norm{\partial\P}{\vsht}^2.
  \end{equation}
\item \emph{Consistency on $\PSt{k}(\P)$}. For all
  $\wsht\in\PSt{k}(\P)$ and $\vsh\in\Vhkt(\P)$
  \begin{equation}\label{eq:stab.PSt.consistency}
    \SP(\wsht,\vsht) = 0.
  \end{equation}
\end{itemize}
Here and throughout the rest of this paper, the notation $a \lesssim b$ is used if there exists a constant $C > 0$ independent of the quantities $a$, $b$ and the mesh size $h$ such that $a \le C b$. The hidden constant depends only on $\Omega$, $k$ and the mesh regularity $\varrho$ (see Assumption \ref{assum:star.shaped} below).

Defining the local seminorm $\localseminorm{{\cdot}}:\Vhkt(\P)\to\REAL$ via
\begin{equation}\label{eq:local.discrete.norm.def}
  \localseminorm{\vsht}^2 := \norm{\P}{\nabla\PinPt{k}\vsht}^2 + \hP^{-2}\norm{\P}{\PiDP{l}(\vsht - \PinPt{k}\vsht)}^2 + \hP^{-1}\norm{\partial\P}{\vsht - \PinPt{k}\vsht}^2,
\end{equation}
and the global seminorm $\discretenorm{{\cdot}}:\Vhkt\to\REAL$ via
\begin{equation}\label{eq:global.discrete.norm.def}
  \discretenorm{\vsht}^2 := \sum_{\P\in\Th}\localseminorm{\vsht}^2,
\end{equation}
we infer from the definition \eqref{eq:aht.def} of $\asht$ and
 \eqref{eq:stab.norm.equivalence} that
\begin{equation}\label{eq:ah.norm.equivalence}
    \discretenorm{\vsht}^2 \lesssim \asht(\vsht,\vsht) \lesssim \discretenorm{\vsht}^2.
\end{equation}
We prove in Section \ref{sec3:theory} that $\discretenorm{{\cdot}}$ (and, thus, $\asht(\cdot,\cdot)^\frac12$)
defines a norm on the homogeneous subspace
\begin{equation*}
  \Vhktzero := \{\vsht\in\Vhkt:\vsht|_{\partial\Omega} = 0\}.
\end{equation*}

\begin{remark}[Example of stabilisation and choice of local semi-norm]
An example of stabilisation bilinear form satisfying \eqref{eq:stab.norm.equivalence} and \eqref{eq:stab.PSt.consistency} is given by:
\begin{equation}\label{eq:stab.def}
  \begin{aligned}
    \SP(\usht,\vsht) ={}& \hP^{-2}\int_{\P}\left(\PiDP{l}(\usht - \PinPt{k}\usht)\right)\left( \PiDP{l}(\vsht - \PinPt{k}\vsht)\right)\dx \\ 
    &+ \hP^{-1}\int_{\partial\P}\left(\usht - \PinPt{k}\usht\right)\left(\vsht - \PinPt{k}\vsht\right)\dS.
  \end{aligned}
\end{equation}
Indeed, if $\vsh\in\textrm{ker}(\PinPt{k})$ then $\SP(\vsht,\vsht)=\hP^{-2}\norm{\P}{\PiDP{l}\vsht}^2 + \hP^{-1}\norm{\partial\P}{\vsht}^2$ so
\eqref{eq:stab.norm.equivalence} holds and, if $\wsht\in\PSt{k}(\P)$, then $\PinPt{k}\wsht=\wsht$ and $\SP(\wsht,\cdot)=0$, which proves \eqref{eq:stab.PSt.consistency}.

As mentioned in the introduction, to circumvent the issues of discrete inverse/trace inequalities on non-polynomial spaces (involving the singular function), we adopt an analysis that is inspired by those developed for fully discrete methods, such as the Hybrid High-Order or Discrete De Rham methods. For such methods, two (equivalent) norms are classically introduced in the analysis: one based on all the degrees of freedom (internal and boundary), and one based on the available (higher-order) local polynomial reconstruction plus terms penalising the difference between this reconstruction and the boundary degrees of freedom; see, e.g., \cite[Eqs. (2.7) and (2.15)]{hho-book} and \cite[Sections 4.4 and 4.5]{Di-Pietro.Droniou:23}. Here, we bypass this need for two norms and only consider the latter one: \eqref{eq:local.discrete.norm.def} is a discrete $H^1$-norm built from the gradient of the local reconstruction $\PinPt{k}\vsht$, plus penalisation terms between this reconstruction and the degrees of freedom $\PiDP{l}\vsht$ and $(\vsht)_{|\partial\P}$. The stabilisation \eqref{eq:stab.def} is then chosen to simply embed these penalisation terms, and as a consequence, for this particular choice, the inequalities in \eqref{eq:ah.norm.equivalence} are equalities.
\end{remark}

To approximate the volumetric term on the right-hand side of
\eqref{eq:pblm:var}, the forcing term $f$ is replaced with its
orthogonal projection $\fsh|_\P=\PiDP{l}\fs$ for all $\P\in\Th$, which shows that
\begin{equation}\label{eq:approx.rhs}
  \int_\Omega \fsh\vsht\dx := \sum_{\P\in\Th} \int_\P (\PiDP{l}\fs) \vsht\dx = \sum_{\P\in\Th} \int_\P \fs (\PiDP{l}\vsht)\dx
\end{equation}
is computable from the DOFs of $\vsht$ since $f$ is known.
The \XVEM{} scheme then reads: Find $\usht\in\Vhktzero$ such that
\begin{equation}\label{eq:discrete.problem}
  \asht(\usht, \vsht) =\int_\Omega \fsh\vsht \qquad \forall \vsht \in \Vhktzero.
\end{equation}

\begin{theorem}[Discrete Energy Error]\label{thm:energy.error}
  Let $\usht\in\Vhktzero$ denote the solution to the \XVEM{} scheme
  \eqref{eq:discrete.problem} and $\us=\usR + \psi \in
  \HONEzr(\Omega)$, with $\usR\in\HS{k+1}(\Th)$ and $\psi\in\Psi(\Omega)$, the solution to the continuous problem
  \eqref{eq:pblm:var}.
  Under Assumption \ref{assum:regularity}, the following energy error estimate holds:
  \begin{equation}\label{eq:energy.error}
    \discretenorm{\ush - \Ihkt \us} \lesssim \hh^k \snorm{\HS{k+1}(\Th)}{\usR},
  \end{equation}
  where $\Ihkt \us = \Ihk \usR + \psi \in \Vhktzero$ with $\Ihk:\HONE(\Omega)\to\Vhk$ the standard VEM interpolant of $\usR$, see \cite{Ahmad-Alsaedi-Brezzi-Marini-Russo:2013}.
\end{theorem}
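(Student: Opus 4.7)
The plan is to follow a third-Strang-type argument adapted to the enriched setting, exploiting the fact that $\psi$ lies in the local approximation spaces exactly. I would start from the norm equivalence \eqref{eq:ah.norm.equivalence}, which reduces the task to estimating $\asht(e,e)$, where $e := \ush - \Ihkt u$. Since $\ush \in \Vhktzero \subset H^1_0(\Omega)$ and, assuming $\Ihkt u \in \Vhktzero$ as well (which should follow from the construction of the interpolant), $e$ is an admissible test function in both the discrete scheme and (by $H^1_0$-conformity) the continuous weak formulation. Using both, and integrating by parts elementwise on the continuous side, I would obtain the error identity
\begin{equation*}
\asht(e,e) = (\fsh - \fs, e) + \sum_{\P\in\Th}\asP(u,e) - \asht(\Ihkt u, e).
\end{equation*}

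The decisive step is the cancellation of all $\psi$-contributions. Writing $u = \usR + \psi$, $\Ihkt u = \Ihk \usR + \psi$, and exploiting that on every element $\psi \in \Psi(\P) \subset \PSt{k}(\P)$, I plan to use three facts: (i) $\PinPt{k}\psi = \psi$; (ii) $\SP(\psi,\cdot) = 0$ by \eqref{eq:stab.PSt.consistency}; and (iii) the defining property of the extended elliptic projector which yields $\asP(\psi, \PinPt{k} e) = \asP(\PinPt{k} e, \psi) = \asP(e,\psi)$. Combining these identities after expanding $\asht(\Ihkt u, e)$ eliminates every occurrence of $\psi$, producing
\begin{equation*}
\asht(e,e) = (\fsh - \fs, e) + \sum_{\P\in\Th}\bigl[\asP(\usR, e) - \asP(\Ihk \usR, \PinPt{k} e) - \SP(\Ihk \usR, e)\bigr],
\end{equation*}
whose right-hand side depends only on the regular component $\usR$.

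Next I would insert, on each element, a polynomial approximation $u_\pi \in \PS{k}(\P) \subset \PSt{k}(\P)$ of $\usR$. Using once more the consistency of $\PinPt{k}$ on $\PSt{k}(\P)$, I obtain $\asP(u_\pi, \PinPt{k} e) = \asP(u_\pi, e)$ and hence
\begin{equation*}
\asP(\usR, e) - \asP(\Ihk \usR, \PinPt{k} e) = \asP(\usR - u_\pi, e) + \asP(u_\pi - \Ihk \usR, \PinPt{k} e).
\end{equation*}
For the second term, Cauchy--Schwarz plus the standard VEM interpolation estimate $|\Ihk \usR - u_\pi|_{H^1(\P)} \lesssim h^k |\usR|_{H^{k+1}(\P)}$ yield the desired bound. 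For the first term, I would integrate by parts and decompose $e = \PinPt{k} e + (e - \PinPt{k} e)$, then reintroduce $\PiDP{l}$-orthogonalities to write everything in terms of quantities controlled by $\localseminorm{e}$: namely $\|\nabla \PinPt{k} e\|_{\P}$, $h_\P^{-1}\|\PiDP{l}(e - \PinPt{k} e)\|_{\P}$ and $h_\P^{-1/2}\|e - \PinPt{k} e\|_{\partial\P}$. The resulting residuals on the data side will be $\|\nabla(\usR - u_\pi)\|_\P$, $\|\Delta(\usR - u_\pi) - \PiDP{l}\Delta(\usR - u_\pi)\|_\P$, and a trace-type term on $\partial\P$, all of which achieve the optimal order $h^k|\usR|_{H^{k+1}(\P)}$ by standard polynomial approximation combined with $\PS{l}\subset \PSD{l}$. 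The stabilisation term $\SP(\Ihk \usR, e)$ is handled by Cauchy--Schwarz for $\SP$, \eqref{eq:stab.norm.equivalence}, and the VEM interpolation error in the discrete norm.

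Finally, for the data consistency term $(\fsh - \fs, e)$, the key observation is that $\fs = -\Delta \usR - \Delta \psi$ with $\Delta \psi \in \Delta\Psi(\P) \subset \PSD{l}(\P)$ by Assumption \ref{assum:regularity} and the definition of $\PSD{l}$; therefore $\fs - \PiDP{l}\fs = -(\Delta \usR - \PiDP{l}\Delta \usR)$, whose $L^2$-norm is again controlled by $h^{k-1}|\usR|_{H^{k+1}(\P)}$. Using \eqref{eq:approx.rhs} and $\PiDP{l}$-orthogonality, I obtain a matching factor $h_\P \localseminorm{e}$ on the test-function side. Summing all the contributions over $\P\in\Th$ and applying discrete Cauchy--Schwarz delivers the estimate \eqref{eq:energy.error}. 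The main obstacle I anticipate is the careful bookkeeping of the integration-by-parts/projection argument in $\asP(\usR - u_\pi, e)$, where one must insert just the right projections of $e$ (and of $\Delta(\usR - u_\pi)$ against $\PiDP{l}$) so that every remaining factor is controlled by $\localseminorm{e}$; it is precisely the enrichment-aware definition of $\PSD{l}$, absorbing $\Delta \Psi$, that makes the whole machinery cleanly reduce the estimate to the regular part $\usR$.
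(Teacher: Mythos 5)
Your overall architecture coincides with the paper's: a coercivity/Strang-type energy argument, cancellation of every $\psi$-contribution through $\PinPt{k}\psi=\psi$, the orthogonality \eqref{eq:PinPt:A} and the $\PSt{k}$-consistency \eqref{eq:stab.PSt.consistency} of $\SP$, insertion of an elementwise polynomial approximation of $\usR$, and the stabilisation bounded as in Lemma \ref{lem:stab.consistency}. The difference is that the paper phrases this as a bound on the consistency residual $\RESD(\us;\vsht)$ for an arbitrary test function plus the Third Strang Lemma, whereas you test directly with $e=\ush-\Ihkt\us$ and invoke the continuous weak formulation; that reorganisation is harmless in itself.

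The genuine gap is in your treatment of the two volume terms. After integrating $\int_\P\nabla(\usR-u_\pi)\cdot\nabla(e-\PinPt{k}e)\dx$ by parts you are left with $-\int_\P\Delta(\usR-u_\pi)(e-\PinPt{k}e)\dx$, and your data term is $\int_\P(\fsh-\fs)\,e\dx$; handled separately, as you propose, each requires a bound of the type $\norm{\P}{e-\PinPt{k}e}\lesssim\hP\localseminorm{e}$ or $\norm{\P}{e-\PiDP{l}e}\lesssim\hP\localseminorm{e}$. No such bound is available here: $\localseminorm{e}$ only controls $\norm{\P}{\nabla\PinPt{k}e}$, $\hP^{-1}\norm{\P}{\PiDP{l}e-\PinPt{k}e}$ and the boundary term, and upgrading this to the full $L^2(\P)$-norm of a virtual function would require Poincar\'e/inverse/trace inequalities on the enriched spaces $\PSD{l}(\P)$, $\PSt{k}(\E)$ — precisely what the paper's ``fully discrete'' setting is designed to avoid (cf.\ Remark \ref{rem:alternate.interpolator}). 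Your listed residual $\norm{\P}{\Delta(\usR-u_\pi)-\PiDP{l}\Delta(\usR-u_\pi)}$ is symptomatic: by orthogonality it can only be paired with an \emph{unprojected} part of $e$, which is exactly the uncontrolled quantity, and in a correct argument it never appears. The repair — and this is what the paper's proof of Theorem \ref{thm:consistency.error} does by keeping the load paired with $\PiDP{l}\vsht$ from the start — is to combine the two terms: since $\Delta\psi,\Delta u_\pi\in\PSD{l}(\P)$, one has $\int_\P(\fsh-\fs)e\dx=\int_\P\Delta(\usR-u_\pi)(e-\PiDP{l}e)\dx$, and adding the integrated-by-parts volume term leaves $\int_\P\Delta(\usR-u_\pi)(\PinPt{k}e-\PiDP{l}e)\dx$, which Lemma \ref{lem:local.terms.boundedness} bounds by $\hP\norm{\P}{\Delta(\usR-u_\pi)}\localseminorm{e}\lesssim \hP^k\snorm{\HS{k+1}(\P)}{\usR}\localseminorm{e}$; the flux residual then only ever meets $(e-\PinPt{k}e)|_{\partial\P}$, as in \eqref{eq:consistency.proof.3}. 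With that combination your proof closes; without it, the two steps as written do not.
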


We note in passing that splitting the interpolant $\Ihkt$ into a singular component and an element of the regular VEM space is crucial to the analysis and relies on the fact that the space $\Vhkt(\P)$ contains the regular VEM space.

\begin{remark}[Alternate interpolator and local enrichment]\label{rem:alternate.interpolator}
A more natural way to define the interpolant on $\Vhkt$ is to take $\Ihkt \vs$ as the unique virtual function that has $((\PiDP{l}\vs)_{\P\in\Th}, (\PitEo{k-2}\vs)_{\E\in\Eset}, (\vs(\nu))_{\nu\in\Vset})$ as degrees of freedom \DOFS{D1}--\DOFS{D3}. With this definition, $\Ihkt$ leaves the enrichment space $\Psi$ invariant, and allows for local enrichment as briefly discussed in Remark \ref{rem:conforming.psi}.

For the upcoming analysis, we would need to establish optimal approximation properties of $\PinPt{k}\Ihkt$, which essentially requires to prove the boundedness of this mapping (defined on $H^2(\P)$) in a scaled $H^2$-seminorm (see \cite[Lemma 1.43 and Section 5.5.6.2]{hho-book}). Proving this boundedness is however not a trivial matter for the extended VEM, as discrete inverse and trace inequalities are not readily available in the non-polynomial spaces $\PSt{k}(\P)$ and $\PSD{l}(\P)$ (we note that it is already quite challenging for the regular VEM \cite{Brenner-Guan-Sung:2017:SEV}). A more flexible approach to go in this direction would perhaps to use a fully discrete analysis (without direct usage of virtual functions, that are difficult to estimate), in the spirit of \cite{yemm:2022:design}.
\end{remark}

\begin{theorem}[$H^1$ Error]\label{thm:energy.error.continuous}
Let $\usht\in\Vhktzero$ and $\us\in\HONEzr(\Omega)$ be as in Theorem \ref{thm:energy.error}.
Under Assumption \ref{assum:regularity}, the following energy error estimate holds:
\begin{equation}\label{eq:energy.error.continuous}
	\Big(\sum_{\P\in\Th}\snorm{\HS{1}(\P)}{\PinPt{k}\ush - \us}^2\Big)^\frac12 \lesssim \hh^k \snorm{\HS{k+1}(\Th)}{\usR}.
\end{equation}
\end{theorem}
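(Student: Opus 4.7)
The plan is to bound $\snorm{\HS{1}(\P)}{\PinPt{k}\ush - \us}^2$ element-wise and sum over $\P\in\Th$. I insert the \XVEM{} interpolant $\Ihkt\us = \Ihk\usR+\psi$ as an intermediate term via the triangle inequality:
\begin{equation*}
\snorm{\HS{1}(\P)}{\PinPt{k}\ush - \us} \le \snorm{\HS{1}(\P)}{\PinPt{k}(\ush - \Ihkt\us)} + \snorm{\HS{1}(\P)}{\PinPt{k}\Ihkt\us - \us}.
\end{equation*}
The first summand equals $\norm{\P}{\nabla\PinPt{k}(\ush - \Ihkt\us)}$, which is precisely the first contribution in the local seminorm $\localseminorm{\ush - \Ihkt\us}$ defined in \eqref{eq:local.discrete.norm.def}. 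After summing its square over $\P$ and taking square roots, this is bounded above by $\discretenorm{\ush - \Ihkt\us}$, which Theorem~\ref{thm:energy.error} already controls by $\hh^k\snorm{\HS{k+1}(\Th)}{\usR}$.

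For the second summand I exploit the structure of $\Ihkt\us$. Since $\psi|_\P\in\PSt{k}(\P)$ we have $\PinPt{k}\psi = \psi$, so linearity of $\PinPt{k}$ yields the cancellation
\begin{equation*}
\PinPt{k}\Ihkt\us - \us = \PinPt{k}\Ihk\usR + \psi - \usR - \psi = \PinPt{k}\Ihk\usR - \usR,
\end{equation*}
which depends only on the regular component $\usR$.

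To control $\snorm{\HS{1}(\P)}{\PinPt{k}\Ihk\usR - \usR}$, I pick a polynomial $\pi\in\PS{k}(\P)\subset\PSt{k}(\P)$ approximating $\usR$ (for example an averaged Taylor polynomial) and split
\begin{equation*}
\snorm{\HS{1}(\P)}{\PinPt{k}\Ihk\usR - \usR} \le \snorm{\HS{1}(\P)}{\PinPt{k}\Ihk\usR - \pi} + \snorm{\HS{1}(\P)}{\pi - \usR}.
\end{equation*}
The second term is of order $\hP^k\snorm{\HS{k+1}(\P)}{\usR}$ by standard polynomial approximation. For the first, I use that $\PinPt{k}\Ihk\usR - \pi\in\PSt{k}(\P)$ is a legitimate test function in the defining relation \eqref{eq:PinPt:A} of $\PinPt{k}$; combined with Cauchy-Schwarz this yields $\norm{\P}{\nabla(\PinPt{k}\Ihk\usR - \pi)} \le \norm{\P}{\nabla(\Ihk\usR - \pi)}$, and a further triangle inequality reduces this to $\snorm{\HS{1}(\P)}{\Ihk\usR - \usR} + \snorm{\HS{1}(\P)}{\usR - \pi}$, both of order $\hP^k\snorm{\HS{k+1}(\P)}{\usR}$ by the standard VEM interpolation estimates of \cite{Ahmad-Alsaedi-Brezzi-Marini-Russo:2013} and polynomial approximation.

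The main obstacle is conceptual rather than technical: the extended projector $\PinPt{k}$ could in principle introduce singular components into $\PinPt{k}\Ihk\usR$ even though $\Ihk\usR$ itself lies in the standard VEM subspace, so no direct commutation-style identity between $\PinPt{k}\Ihk$ and a classical polynomial projection is available. Inserting the purely polynomial intermediate $\pi$ on the target side of $\PinPt{k}$ and exploiting its best $\HS{1}$-seminorm characterisation against $\PSt{k}(\P)$-test functions is the key device that collapses every remaining quantity to classical VEM interpolation and polynomial approximation bounds on $\usR$.
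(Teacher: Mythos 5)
Your proof is correct and follows the same overall route as the paper: insert the interpolant $\Ihkt\us=\Ihk\usR+\psi$, control the discrete part $\PinPt{k}(\ush-\Ihkt\us)$ through $\discretenorm{\ush-\Ihkt\us}$ and Theorem \ref{thm:energy.error}, and use the cancellation of $\psi$ (since $\PinPt{k}\psi=\psi$) to reduce the remaining approximation term to the regular component $\usR$. The only real difference is in how that remaining term is handled: the paper inserts the extra intermediate $\PinPt{k}\us$ and invokes the $H^1(\P)$-boundedness and optimal approximation properties of $\PinPt{k}$ together with the interpolation estimate of \cite[Lemma 2.23]{Brenner-Guan-Sung:2017:SEV}, whereas you insert a polynomial $\pi\in\PS{k}(\P)$ and derive the needed bound from the variational definition \eqref{eq:PinPt:A}, i.e.\ the $H^1$-seminorm contraction $\norm{\P}{\nabla\PinPt{k}(\Ihk\usR-\pi)}\le\norm{\P}{\nabla(\Ihk\usR-\pi)}$ (valid since $\PinPt{k}\pi=\pi$), plus classical VEM interpolation and polynomial approximation. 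Your variant is marginally more self-contained, as it does not quote boundedness or approximation properties of the extended projector as black boxes but recovers them from the projector's defining relation; both arguments ultimately rest on the same two classical estimates for $\Ihk$ and for polynomial approximation of $\usR$.
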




\section{Convergence analysis}
\label{sec3:theory}

We provide in this section a proof of Theorem \ref{thm:energy.error}. The proof hinges on the consistency of the scheme which we state in Theorem \ref{thm:consistency.error}. We begin with the following lemma which guarantees the stability and well-posedness of the scheme, and is also crucial in validating Theorem \ref{thm:energy.error}.
Given a mesh sequence satisfying Assumption \ref{assum:star.shaped}, the following continuous trace inequality holds \parencite{brenner.sung:2018:virtual}:
\begin{lemma}[Continuous trace inequality]
	For all $\P\in\Th$ and \(v\in H^1(\P)\), it holds,
	\begin{equation}\label{eq:continuous.trace}
		\hP\norm{\partial \P}{v}^2 \lesssim \norm{\P}{v}^2+\hP^2\norm{\P}{\nabla v}^2,
	\end{equation}
	where the hidden constant depends only on the mesh regularity parameter $\varrho$.
\end{lemma}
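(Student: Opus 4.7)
The plan is to apply a Rellich-type integration-by-parts identity, exploiting the star-shapedness of $\P$ to produce the required factor $\hP$ on the left-hand side. Let $\xv^\star\in\P$ be the centre of the ball $B:=B(\xv^\star,\varrho\hP)\subset\overline{\P}$ guaranteed by Assumption~\ref{assum:star.shaped}, and set $\psi(\xv):=\xv-\xv^\star$. The key geometric observation is that on each edge $\E\subset\partial\P$ with outward unit normal $\norE$, one has
\[
\psi(\xv)\cdot\norE \;\geq\; \varrho\,\hP \qquad\forall\,\xv\in\E.
\]
Indeed, the point $\yv:=\xv^\star+\varrho\hP\,\norE$ lies in $\overline{B}\subset\overline{\P}$, and star-shapedness guarantees $[\yv,\xv]\subset\overline{\P}$; since $\xv\in\E$ and $\norE$ is the outward normal, $(\xv-\yv)\cdot\norE\geq 0$, which rearranges to the stated lower bound. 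Clearly also $|\psi(\xv)|\leq\hP$ on $\overline{\P}$.

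The rest is routine. For $v\in\CS{\infty}(\overline{\P})$, apply the divergence theorem to $v^2\psi$, using that $\nabla\cdot\psi=2$ in dimension 2:
\[
\int_{\partial\P} v^2\,(\psi\cdot\nor)\,\dS \;=\; \int_{\P}\nabla\cdot(v^2\psi)\,\dx \;=\; 2\int_{\P} v^2\,\dx + 2\int_{\P} v\,(\nabla v\cdot\psi)\,\dx.
\]
The geometric lower bound yields
\(
\varrho\,\hP\,\norm{\partial\P}{v}^2 \leq \int_{\partial\P} v^2(\psi\cdot\nor)\,\dS,
\)
while Cauchy--Schwarz bounds the right-hand side from above by $2\norm{\P}{v}^2 + 2\hP\,\norm{\P}{v}\,\norm{\P}{\nabla v}$. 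A Young inequality applied to the cross term absorbs the factor $\hP$ and produces the claimed estimate with hidden constant depending only on $\varrho$. The extension from $\CS{\infty}(\overline{\P})$ to $\HONE(\P)$ is then standard, using the density of $\CS{\infty}(\overline{\P})$ in $\HONE(\P)$ on the Lipschitz domain $\P$ together with continuity of the trace operator.

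The only non-routine step, and the one where mesh regularity really enters, is the uniform lower bound on $\psi\cdot\norE$: this is what turns a purely analytic trace inequality into one with the correct scaling in $\hP$, and why the constant is expressible solely in terms of $\varrho$. All remaining ingredients are classical calculus identities and inequalities.
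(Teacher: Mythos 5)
Your proof is correct. Note that the paper itself does not prove this lemma: it simply cites the reference \cite{brenner.sung:2018:virtual}, so there is no in-text argument to compare against; your write-up is essentially the standard self-contained derivation that underlies such citations. The route you take -- the Rellich/divergence-theorem identity for $v^2\psi$ with $\psi(\xv)=\xv-\xv^\star$, the uniform lower bound $\psi\cdot\norE\ge\varrho\hP$ on each edge from star-shapedness with respect to the ball $B(\xv^\star,\varrho\hP)$, then Cauchy--Schwarz and Young -- is exactly the classical scaled trace inequality argument for domains star-shaped with respect to a ball, and it has the merit of producing an explicit constant (essentially $3/\varrho$) rather than an abstract one. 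The only step worth spelling out a little more is the inequality $(\xv-\yv)\cdot\norE\ge 0$: since $\P$ need not be convex, this does not follow from $\yv\in\overline{\P}$ alone, but it does follow from your segment argument -- for $\xv$ in the relative interior of $\E$ the element lies locally on one side of the line containing $\E$, so the points of $[\yv,\xv]$ near $\xv$ satisfy $(\zv-\xv)\cdot\norE\le 0$, which gives the claim there, and the endpoints of $\E$ are recovered by continuity. With that sentence added, the argument is complete; the density of $\CS{\infty}(\overline{\P})$ in $\HONE(\P)$ is legitimate since a polygon star-shaped with respect to a ball is a Lipschitz domain.
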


\begin{lemma}\label{lem:discrete.norm}
    The mapping $\discretenorm{{\cdot}}:\Vhktzero\to[0,\infty)$ defined by \eqref{eq:global.discrete.norm.def} is a norm.
\end{lemma}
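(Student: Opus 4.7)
The plan is to observe that $\discretenorm{\cdot}$ is manifestly a seminorm, since each term on the right-hand side of \eqref{eq:local.discrete.norm.def} is a squared semi-norm (an $L^2$-norm composed with a linear operator), so homogeneity and the triangle inequality are immediate. The entire substance of the lemma is the definiteness property: if $\discretenorm{\vsht}=0$ for some $\vsht\in\Vhktzero$, then $\vsht=0$. My aim is therefore to show that, under this hypothesis, all degrees of freedom \DOFS{D1}--\DOFS{D3} of $\vsht$ vanish on every element, whence Lemma \ref{lem:unisolvence} (applied element by element) forces $\vsht\equiv0$.

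The first step is to fix $\P\in\Th$ and read off the three consequences of $\localseminorm{\vsht}=0$: (i)~$\nabla\PinPt{k}\vsht=0$ in $\P$, so $\PinPt{k}\vsht$ is a constant $c_\P\in\REAL$; (ii)~$\PiDP{l}(\vsht-c_\P)=0$, and since constants belong to $\PSD{l}(\P)$ (as $l\ge 0$) this simplifies to $\PiDP{l}\vsht=c_\P$; (iii)~$\vsht=c_\P$ on $\partial\P$. Step (iii) in particular determines the boundary value of $\vsht$, which by Lemma~\ref{lem:boundary.unisolvence} fixes the \DOFS{D1} and \DOFS{D2} degrees of freedom as those associated with the constant function $c_\P$.

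The second step is to upgrade $c_\P$ to zero using global information. Since $\vsht\in\HONE(\Omega)$, its traces on any internal edge $\E=\P_1\cap\P_2$ coincide from both sides; but (iii) yields $\vsht|_\E=c_{\P_1}=c_{\P_2}$, so the constants $c_\P$ are equal across any two edge-adjacent elements. Since $\Omega$ is connected, this propagates to a single global constant $c$; and the homogeneous boundary condition $\vsht|_{\partial\Omega}=0$ combined with $\vsht=c$ on boundary edges forces $c=0$. Consequently $\PinPt{k}\vsht=0$, $\PiDP{l}\vsht=0$, and $\vsht|_{\partial\P}=0$ for every $\P\in\Th$, i.e.\ all of \DOFS{D1}--\DOFS{D3} vanish, and Lemma~\ref{lem:unisolvence} gives $\vsht=0$.

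The only mild subtlety I anticipate is the propagation argument: one must justify that ``$c_\P$ is constant across edge-neighbours'' reaches every element of $\Th$ and eventually touches $\partial\Omega$, which rests on the connectedness of $\Omega$ and the fact that the mesh covers it. Everything else is a direct unwinding of the definitions together with an appeal to the two unisolvence lemmas already established.
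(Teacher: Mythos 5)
Your proof is correct and follows essentially the same route as the paper's: seminorm property is immediate, the vanishing of the three local terms forces $\PinPt{k}\vsht$ and the trace of $\vsht$ to be elementwise constants, a neighbour-to-neighbour propagation combined with the homogeneous boundary condition kills these constants, and then all degrees of freedom vanish so unisolvence (Lemma \ref{lem:unisolvence}) gives $\vsht=0$. The only difference is that you spell out the connectedness/trace-matching details of the propagation step and note explicitly that constants lie in $\PSD{l}(\P)$, which the paper leaves implicit.
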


\begin{proof}
    As $\discretenorm{ {\cdot} }$ is clearly a seminorm, we only have to prove that if $\discretenorm{\vsht} = 0$ for some $\vsht\in\Vhktzero$, then $\vsht=0$. To this end, we note that $\discretenorm{\vsht} = 0$ implies $\norm{\P}{\nabla \PinPt{k}\vsht}=0$ for each $\P\in\Th$, and thus that $\PinPt{k}\vsht$ is constant on each $\P\in\Th$. The condition $\norm{\partial\P}{\vsht-\PinPt{k}\vsht}=0$ shows that $\vsht|_{\partial\P}=(\PinPt{k}\vsht)|_{\partial\P}$ is also constant. Working from neighbour to neighbour and using the homogeneous condition $\vsht|_{\partial\Omega} = 0$, we infer that those constants are all zero. Combining with the condition
    \[
    \norm{\P}{\PiDP{l}(\vsht-\PinPt{k}\vsht)} = 0
    \]
    we infer that for all $\P\in\Th$, $\PiDP{l}\vsht = 0$. Therefore all the DOFs of $\vsht$ vanish, and thus $\vsht = 0$.
\end{proof}

\begin{lemma}[Consistency of $\SP$]\label{lem:stab.consistency}
    Let $\SP : \Vhkt(\P)\times\Vhkt(\P)\to\REAL$ be a stabilisation term satisfying equation \eqref{eq:stab.norm.equivalence}. Then it holds for all $\vs=\vsR + \psi \in \HONE(\P)$ with
    $\vsR\in\HS{k+1}(\P)$ and $\psi\in\Psi(\P)$ that
    \begin{equation}\label{eq:stab.consistency}
        \SP(\Ihkt\vs, \Ihkt\vs) \lesssim \left[\hP^k\snorm{\HS{k+1}(\P)}{\vsR}\right]^2.
    \end{equation}
\end{lemma}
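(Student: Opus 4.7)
The plan is to exploit two crucial properties of $\SP$: its consistency on $\PSt{k}(\P)$ (which contains $\psi$) and its norm equivalence on $\ker(\PinPt{k})$. The first step is a reduction. Since $\Ihkt\vs = \Ihk\vsR + \psi$ and $\psi \in \PSt{k}(\P)$, the bilinearity and symmetry of $\SP$ together with \eqref{eq:stab.PSt.consistency} immediately give $\SP(\Ihkt\vs,\Ihkt\vs) = \SP(\Ihk\vsR, \Ihk\vsR)$, so that the singular component is entirely absorbed before any error estimate is needed. Setting $\wsht := \Ihk\vsR - \PinPt{k}\Ihk\vsR$, which lies in $\Vhkt(\P)\cap\ker(\PinPt{k})$ because $\PinPt{k}$ is a projector and $\PSt{k}(\P)\subset\Vhkt(\P)$, the same consistency argument gives $\SP(\Ihk\vsR,\Ihk\vsR) = \SP(\wsht,\wsht)$.

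Having reduced to $\wsht \in \ker(\PinPt{k})$, the upper bound in \eqref{eq:stab.norm.equivalence} combined with the $L^2$-contractivity of $\PiDP{l}$ yields $\SP(\wsht,\wsht)\lesssim h_\P^{-2}\norm{\P}{\wsht}^2 + h_\P^{-1}\norm{\partial\P}{\wsht}^2$. The boundary term is then controlled via the continuous trace inequality \eqref{eq:continuous.trace}, reducing everything to $h_\P^{-2}\norm{\P}{\wsht}^2 + \norm{\P}{\nabla\wsht}^2$. The crucial observation here is that $\wsht$ has vanishing mean on $\P$: applying \eqref{eq:PinPt:B} to $\Ihk\vsR \in \Vhk \subset \Vhkt(\P)$ gives $\int_\P \wsht = 0$, so the Poincaré--Wirtinger inequality provides $\norm{\P}{\wsht}\lesssim h_\P\norm{\P}{\nabla\wsht}$, and the entire stabilisation is controlled by $\norm{\P}{\nabla\wsht}^2$.

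It remains to show $\norm{\P}{\nabla\wsht}\lesssim h_\P^k\snorm{\HS{k+1}(\P)}{\vsR}$. Introducing an $L^2$-orthogonal polynomial projection $\pi\vsR \in \PS{k}(\P)\subset\PSt{k}(\P)$, I would split $\nabla\wsht = \nabla(\Ihk\vsR - \vsR) + \nabla(\vsR - \pi\vsR) + \nabla\PinPt{k}(\pi\vsR - \Ihk\vsR)$, using $\PinPt{k}(\pi\vsR) = \pi\vsR$. The first two terms are bounded by the standard VEM interpolation estimate and Bramble--Hilbert polynomial approximation, both giving $h_\P^k\snorm{\HS{k+1}(\P)}{\vsR}$. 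The last term is handled via the $\HS{1}$-seminorm contraction of $\PinPt{k}$, which follows directly from its variational definition \eqref{eq:PinPt:A} by testing with $\qst = \PinPt{k}(\pi\vsR - \Ihk\vsR)$; this reduces it to $\norm{\P}{\nabla(\pi\vsR - \Ihk\vsR)}$, which is then bounded by the same two quantities as above.

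The main technical subtlety is that we never need any approximation estimate directly on $\PSt{k}(\P)$ or on the non-polynomial enrichment $\Psi$, which would be difficult since $\psi$ may be singular and discrete inverse/trace inequalities are unavailable there. The pivotal structural fact is that $\PSt{k}(\P) \supset \PS{k}(\P)$, which allows us to test against ordinary polynomials $\pi\vsR$, and that $\Vhk \subset \Vhkt(\P)$, which lets us invoke classical regular-VEM interpolation results for $\Ihk\vsR$. The splitting $\Ihkt\vs = \Ihk\vsR + \psi$ is precisely what converts an estimate involving singular functions into one involving only regular-VEM quantities.
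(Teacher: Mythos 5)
Your proof is correct and follows essentially the same route as the paper's: reduce via the $\PSt{k}(\P)$-consistency of $\SP$ to the kernel element $\Ihk\vsR-\PinPt{k}\Ihk\vsR$, then apply the stability bound \eqref{eq:stab.norm.equivalence}, the $L^2$-boundedness of $\PiDP{l}$, the trace inequality and the Poincar\'e--Wirtinger inequality, and finish with standard approximation results. The only (harmless) difference is the final step: where the paper uses the best-approximation property of $\nabla\PinPt{k}$ in $\nabla\PSt{k}(\P)\supset\nabla\PS{k}(\P)$ to replace $\PinPt{k}$ by $\PinP{k}$ and cites the approximation of $\PinP{k}\Ihk$, you insert an $L^2$-orthogonal polynomial projection and combine the $H^1$-seminorm contraction of $\PinPt{k}$ with the VEM interpolation and Bramble--Hilbert estimates, which yields the same bound.
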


\begin{proof}
    It follows from the definition of $\Ihkt$ and the $\PSt{k}(\P)$-consistency \eqref{eq:stab.PSt.consistency} of $\SP$ together with the fact that $\psi+ \PinPt{k}\Ihk\vsR\in\PSt{k}(\P)$ that
    \[
        \SP(\Ihkt\vs, \Ihkt\vs) = \SP(\Ihk\vsR+\psi, \Ihk\vsR+\psi)= \SP(\Ihk\vsR - \PinPt{k}\Ihk\vsR, \Ihk\vsR - \PinPt{k}\Ihk\vsR).
    \]
    Therefore, we infer from equation \eqref{eq:stab.norm.equivalence} that
    \begin{align}
        \SP(\Ihkt\vs, \Ihkt\vs) \lesssim{}& \hP^{-2}\norm{\P}{\PiDP{l}(\Ihk - \PinPt{k}\Ihk)\vsR}^2 + \hP^{-1}\norm{\partial\P}{(\Ihk - \PinPt{k}\Ihk)\vsR}^2.\label{eq:stab.consistency.proof.1}
    \end{align}
    By the $L^2$-boundedness of $\PiDP{l}$ and applying the continuous trace inequality \eqref{eq:continuous.trace} on the boundary term it follows that
    \begin{multline}\label{eq:stab.consistency.proof.2}
        \hP^{-2}\norm{\P}{\PiDP{l}(\Ihk - \PinPt{k}\Ihk)\vsR}^2 + \hP^{-1}\norm{\partial\P}{(\Ihk - \PinPt{k}\Ihk)\vsR}^2 \\ \lesssim \hP^{-2}\norm{\P}{(\Ihk - \PinPt{k}\Ihk)\vsR}^2 + \norm{\P}{\nabla(\Ihk - \PinPt{k}\Ihk)\vsR}^2.
    \end{multline}
    Substituting \eqref{eq:stab.consistency.proof.2} into \eqref{eq:stab.consistency.proof.1} and applying a Poincar\'{e}--Wirtinger inequality (due to the zero mean value of $(\Ihk - \PinPt{k}\Ihk)\vsR$ on $\P$) yields
    \[
         \SP(\Ihkt\vs, \Ihkt\vs)  \lesssim \norm{\P}{\nabla(\Ihk - \PinPt{k}\Ihk)\vsR}^2 \le \norm{\P}{\nabla(\Ihk - \PinP{k}\Ihk)\vsR}^2,
    \]
    where the substitution of $\PinPt{k}$ with $\PinP{k}$ (the elliptic projector on $\PS{k}(\P)$) is justified by the definition of $\PinPt{k}$, which ensures that $\nabla\PinPt{k}\Ihk\vsR$ is the best $L^2$-approximation of $\nabla\Ihk\vsR$ in $\nabla\PSt{k}(\P)\supset\nabla\PS{k}(\P)$, while $\nabla\PinP{k}\Ihk\vsR\in\nabla\PS{k}(\P)$. The proof of the consistency property \eqref{eq:stab.consistency} is complete by invoking the optimal approximation properties of $\PinP{k}\Ihk$ stated in \cite[Lemma 2.23]{Brenner-Guan-Sung:2017:SEV}.
\end{proof}

\begin{lemma}\label{lem:local.terms.boundedness}
    It holds for all $\vsht\in\Vhkt(\P)$ that
    \begin{equation}\label{eq:local.terms.boundedness}
        \norm{\P}{\nabla\PinPt{k}\vsht} + \hP^{-1}\norm{\P}{\PiDP{l}\vsht - \PinPt{k}\vsht} + \hP^{-\frac12}\norm{\partial\P}{\vsht - \PinPt{k}\vsht} + \SP(\vsht, \vsht)^\frac12 \lesssim \localseminorm{\vsht}.
    \end{equation}
\end{lemma}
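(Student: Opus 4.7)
The plan is to bound each of the four terms on the left-hand side of \eqref{eq:local.terms.boundedness} separately by $\localseminorm{\vsht}$. Two of the four terms appear verbatim in the definition \eqref{eq:local.discrete.norm.def} of $\localseminorm{\vsht}$: the first term $\norm{\P}{\nabla\PinPt{k}\vsht}$ and the third term $\hP^{-\frac12}\norm{\partial\P}{\vsht - \PinPt{k}\vsht}$ are directly controlled. The work is therefore concentrated on the second term (replacing $\PiDP{l}(\vsht - \PinPt{k}\vsht)$ by $\PiDP{l}\vsht - \PinPt{k}\vsht$) and the stabilisation term.

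For the second term, I would write
\[
\PiDP{l}\vsht - \PinPt{k}\vsht = \PiDP{l}(\vsht - \PinPt{k}\vsht) - (I-\PiDP{l})\PinPt{k}\vsht,
\]
so that by the triangle inequality it suffices to control $\hP^{-1}\norm{\P}{(I-\PiDP{l})\PinPt{k}\vsht}$. Since $l=\max\{0,k-2\}\ge 0$, the space $\PSD{l}(\P)$ contains the constants, so the $L^2$-orthogonal projection $(I-\PiDP{l})$ is bounded by the distance to constants: applying a Poincar\'e--Wirtinger inequality on the star-shaped element $\P$ (available under Assumption \ref{assum:star.shaped}) gives
\[
\hP^{-1}\norm{\P}{(I-\PiDP{l})\PinPt{k}\vsht}\lesssim \norm{\P}{\nabla\PinPt{k}\vsht}.
\]
Combining with the contribution of $\PiDP{l}(\vsht-\PinPt{k}\vsht)$ already in the seminorm yields the required bound on the second term.

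For the stabilisation term, the key observation is that the $\PSt{k}(\P)$-consistency \eqref{eq:stab.PSt.consistency}, together with the symmetry of $\SP$, implies $\SP(\PinPt{k}\vsht,\cdot)=\SP(\cdot,\PinPt{k}\vsht)=0$ since $\PinPt{k}\vsht\in\PSt{k}(\P)$. Expanding $\SP(\vsht,\vsht)$ by writing $\vsht=(\vsht-\PinPt{k}\vsht)+\PinPt{k}\vsht$ and using bilinearity, all cross terms and the pure $\PinPt{k}\vsht$ term vanish, leaving
\[
\SP(\vsht,\vsht)=\SP(\vsht-\PinPt{k}\vsht,\vsht-\PinPt{k}\vsht).
\]
Since $\PinPt{k}$ is a projector, $\vsht-\PinPt{k}\vsht\in\ker(\PinPt{k})$, so the upper bound in \eqref{eq:stab.norm.equivalence} applies and gives
\[
\SP(\vsht,\vsht)\lesssim \hP^{-2}\norm{\P}{\PiDP{l}(\vsht-\PinPt{k}\vsht)}^2 + \hP^{-1}\norm{\partial\P}{\vsht-\PinPt{k}\vsht}^2\le \localseminorm{\vsht}^2,
\]
which concludes the argument. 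The only mildly delicate point is the second-term bound, where one must be careful that the Poincar\'e--Wirtinger inequality is legitimately applied (using that $\PSD{l}(\P)$ contains constants and that $\P$ is star-shaped so that the Poincar\'e constant scales like $\hP$); the rest follows from direct inspection and the defining properties of $\PinPt{k}$ and $\SP$.
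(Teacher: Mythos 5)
Your proof is correct and follows essentially the same route as the paper: the first and third terms are read off the definition of $\localseminorm{{\cdot}}$, the second is handled by the same add-and-subtract of $\PiDP{l}\PinPt{k}\vsht$ plus the best-approximation property of $\PiDP{l}$ (the paper compares with $\PizP{l}\PinPt{k}\vsht$ and invokes the approximation properties of $\PizP{l}$, you compare with constants and invoke Poincar\'e--Wirtinger, which is the same estimate), and the stabilisation term is bounded exactly as in the paper via $\SP(\vsht,\vsht)=\SP(\vsht-\PinPt{k}\vsht,\vsht-\PinPt{k}\vsht)$ and the upper bound in \eqref{eq:stab.norm.equivalence} on $\ker(\PinPt{k})$.
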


\begin{proof}
    As each of $\norm{\P}{\nabla\PinPt{k}\vsht}$ and $\hP^{-\frac12}\norm{\partial\P}{\vsht - \PinPt{k}\vsht}$ appear in the definition of $\localseminorm{{\cdot}}$, their boundedness follows trivially. The boundedness of $\SP(\vsht, \vsht)^\frac12$ is a direct result of \eqref{eq:stab.norm.equivalence} applied to $\vsht-\PinPt{k}\vsht$ and of the consistency \eqref{eq:stab.PSt.consistency} which gives $\SP(\vsht-\PinPt{k}\vsht,\vsht-\PinPt{k}\vsht)=\SP(\vsht,\vsht)$. It remains only to show that
    \[
        \hP^{-1}\norm{\P}{\PiDP{l}\vsht - \PinPt{k}\vsht} \lesssim \localseminorm{\vsht}.
    \]
    To see this, we add and subtract the term $\PiDP{l}\PinPt{k}\vsht$ and apply a triangle inequality to yield
    \[
        \hP^{-1}\norm{\P}{\PiDP{l}\vsht - \PinPt{k}\vsht} \le \hP^{-1}\norm{\P}{\PiDP{l}\vsht - \PiDP{l}\PinPt{k}\vsht} + \hP^{-1}\norm{\P}{\PiDP{l}\PinPt{k}\vsht - \PinPt{k}\vsht}.
    \]
    The bound
    \[
        \hP^{-1}\norm{\P}{\PiDP{l}\vsht - \PiDP{l}\PinPt{k}\vsht} \le \localseminorm{\vsht}
    \]
    follows trivially from the definition of $\localseminorm{{\cdot}}$. As orthogonal projectors are the best approximations for their norm, and $\PS{l}(\P)\subset\PSD{l}(\P)$ it holds that
    \[
        \hP^{-1}\norm{\P}{\PiDP{l}\PinPt{k}\vsht - \PinPt{k}\vsht} \le \hP^{-1}\norm{\P}{\PizP{l}\PinPt{k}\vsht - \PinPt{k}\vsht},
    \]
    where $\PizP{l}$ is the $L^2(\P)$-orthogonal projector on $\PS{l}(\P)$. Applying the approximation properties of $\PizP{l}$ to yield
    \[
        \hP^{-1}\norm{\P}{\PizP{l}\PinPt{k}\vsht - \PinPt{k}\vsht} \lesssim \norm{\P}{\nabla\PinPt{k}\vsht}
    \]
    completes the proof.
\end{proof}

With $\us$ the exact solution to \eqref{eq:pblm:var}, which we assume to satisfy the conditions in Theorem \ref{thm:energy.error}, the consistency error is given by the linear form \(\RESD(\us;\cdot):\Vhktzero\to\REAL\) defined for all \(\vsht\in\Vhktzero\) as
 \begin{equation}\label{eq:RESD}
    \RESD(\us;\vsht):= \int_{\Omega}\fsh\vsht\dx - \ash(\Ihkt\us, \vsht).
\end{equation}

\begin{theorem}[Consistency error]\label{thm:consistency.error}
  Recalling that the exact solution to \eqref{eq:pblm:var} is written $\us=\usR+\psi$, the consistency error satisfies the estimate
    \begin{equation}\label{eq:RESD:error}
        \abs{\RESD(\us;\vsht)} \lesssim \hs^{k} \snorm{\HS{k+1}(\Th)}{\usR} \discretenorm{\vsht}.
    \end{equation}
\end{theorem}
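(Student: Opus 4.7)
The plan is to split $\RESD(\us;\vsht)$ into three contributions—a source-discretisation error, an elliptic-projector consistency error, and a stabilisation error. Using the weak form $\int_\Omega f\vsht\,\dx = \int_\Omega\nabla\us\cdot\nabla\vsht\,\dx$ (valid because $\vsht\in\HONEzr(\Omega)$) together with the projection identity \eqref{eq:PinPt:A} (which yields $\asP(\PinPt{k}\Ihkt\us,\PinPt{k}\vsht) = \int_\P\nabla\PinPt{k}\Ihkt\us\cdot\nabla\vsht\,\dx$), I would write
\[
\RESD(\us;\vsht) = \underbrace{\int_\Omega(\fsh - f)\vsht\,\dx}_{T_1} + \underbrace{\sum_{\P\in\Th}\int_\P\nabla(\us - \PinPt{k}\Ihkt\us)\cdot\nabla\vsht\,\dx}_{T_2} - \underbrace{\sum_{\P\in\Th}\SP(\Ihkt\us,\vsht)}_{T_3}.
\]
The stabilisation error $T_3$ is handled by Cauchy--Schwarz on $\SP$ together with Lemmas \ref{lem:stab.consistency} and \ref{lem:local.terms.boundedness}, and a discrete Cauchy--Schwarz across elements.

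For $T_1$, the crucial cancellation $\fsh - f = -(I - \PiDP{l})\Delta\usR$ follows from Assumption \ref{assum:regularity} because $\Delta\psi \in \PSD{l}(\P)$ is fixed by $\PiDP{l}$. Orthogonality of $\fsh - f$ against $\PSD{l}(\P)$ allows subtracting $\PiDP{l}\vsht$ from the test, giving $|T_1|\lesssim \sum_\P\|(I-\PiDP{l})\Delta\usR\|_\P\,\|\vsht-\PiDP{l}\vsht\|_\P$. Standard $L^2$-projection approximation controls $\|(I-\PiDP{l})\Delta\usR\|_\P$ by $h_\P^{k-1}|\usR|_{\HS{k+1}(\P)}$ for $k\ge 2$ and by $|\usR|_{\HS{2}(\P)}$ for $k=1$, while $\|\vsht-\PiDP{l}\vsht\|_\P\lesssim h_\P\localseminorm{\vsht}$ would be obtained via the decomposition $\vsht - \PiDP{l}\vsht = (I-\PiDP{l})(\vsht - \PinPt{k}\vsht) + (I-\PiDP{l})\PinPt{k}\vsht$, combining $L^2$-approximation of $\PiDP{l}$ on $\PinPt{k}\vsht\in\PSt{k}(\P)$ with a Poincar\'e--Wirtinger bound on the mean-zero function $\vsht - \PinPt{k}\vsht$.

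The heart of the argument is $T_2$. The invariance $\PinPt{k}\psi = \psi$ combined with $\Ihkt\us = \Ihk\usR + \psi$ yields $\us - \PinPt{k}\Ihkt\us = \usR - \PinPt{k}\Ihk\usR$, reducing the analysis to the regular component; the estimate $\|\nabla(\usR - \PinPt{k}\Ihk\usR)\|_\P\lesssim h_\P^k|\usR|_{\HS{k+1}(\P)}$ follows from the best-approximation property of $\PinPt{k}$ in $\nabla\PSt{k}(\P)\supset\nabla\PS{k}(\P)$ combined with the standard VEM estimates of \cite{Brenner-Guan-Sung:2017:SEV} for $\PinP{k}\Ihk$. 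I would split $\nabla\vsht = \nabla\PinPt{k}\vsht + \nabla(\vsht - \PinPt{k}\vsht)$: the first piece gives the required bound directly by Cauchy--Schwarz with $\|\nabla\PinPt{k}\vsht\|_\P\le\localseminorm{\vsht}$. The second piece uses the defining orthogonality $\nabla(\vsht - \PinPt{k}\vsht)\perp\nabla\PSt{k}(\P)$ in $L^2(\P)$ to replace $\PinPt{k}\Ihk\usR$ by $\PinPt{k}\usR$, followed by an integration by parts whose volume term exploits $\Delta\PinPt{k}\usR\in\PSD{l}(\P)$ and $\PSD{l}$-orthogonality, and whose boundary term uses the global continuity of the normal trace of $\nabla\usR$ (a consequence of $\Delta\usR\in L^2(\Omega)$ via the decomposition $\usR = \us - \psi$). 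The main obstacle I anticipate is the derivation of the fully-discrete bounds $\|\vsht - \PiDP{l}\vsht\|_\P\lesssim h_\P\localseminorm{\vsht}$ and $\|\nabla(\vsht - \PinPt{k}\vsht)\|_\P\lesssim\localseminorm{\vsht}$: because the enriched virtual space $\Vhkt(\P)$ lacks polynomial structure, classical inverse and trace inequalities are unavailable, and these estimates must instead be derived by exploiting the PDE structure $-\Delta\vsht\in\PSD{l}(\P)$ via judicious test-function arguments, in the spirit of HHO-type analyses.
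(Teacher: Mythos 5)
Your decomposition $\RESD(\us;\vsht)=T_1+T_2-T_3$ is a valid identity, and several of your ingredients match the paper's proof (the cancellation of the singular part through $\PinPt{k}\psi=\psi$, the replacement of $\PinPt{k}$ by $\PinP{k}$ via the best-approximation property so as to invoke \cite[Lemma 2.23]{Brenner-Guan-Sung:2017:SEV}, and the treatment of the stabilisation term $T_3$). The genuine gap is exactly the obstacle you name at the end and do not resolve: your bounds for $T_1$ and for the second piece of $T_2$ require $\norm{\P}{\vsht-\PiDP{l}\vsht}\lesssim\hP\localseminorm{\vsht}$ and $\norm{\P}{\nabla(\vsht-\PinPt{k}\vsht)}\lesssim\localseminorm{\vsht}$. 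Neither follows from \eqref{eq:local.discrete.norm.def}: the seminorm controls only $\nabla\PinPt{k}\vsht$ in the volume, the projection $\PiDP{l}(\vsht-\PinPt{k}\vsht)$, and the boundary trace of $\vsht-\PinPt{k}\vsht$; it gives no control of the $L^2(\P)$ or $H^1(\P)$ norm of the virtual function itself. Such control amounts to an inverse/trace-type estimate on the enriched space $\Vhkt(\P)$, which the paper explicitly flags as unavailable (Remark \ref{rem:alternate.interpolator}; it is already delicate for the plain VEM), and your suggestion to recover it ``from the PDE structure $-\Delta\vsht\in\PSD{l}(\P)$'' runs into the same wall: any test-function or integration-by-parts argument brings in $\norm{\P}{\Delta\vsht}$ or the normal trace of $\nabla\vsht$, which are likewise not controlled by the degrees of freedom without precisely such an inverse inequality. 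So the proposal is incomplete at its crux.

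The paper's proof is structured so that these estimates are never needed. It starts from $\int_\P\fsh\vsht=-\int_\P\Delta\us\,\PiDP{l}\vsht$ (a consequence of \eqref{eq:approx.rhs} and $f=-\Delta\us$), so the source is tested against $\PiDP{l}\vsht$ rather than $\vsht$, and then inserts $z=\psi+\PinP{k}\usR\in\PSt{k}(\P)$, for which $-\int_\P\Delta z\,(\PiDP{l}\vsht-\PinPt{k}\vsht)+\Hhalfprod{\nabla z\cdot\nor,\vsht-\PinPt{k}\vsht}{\partial\P}=0$ by the definitions of $\PiDP{l}$ and $\PinPt{k}$. This yields a representation of $\RESD(\us;\vsht)$ in which the test function only ever appears through $\nabla\PinPt{k}\vsht$, $\PiDP{l}\vsht-\PinPt{k}\vsht$, $(\vsht-\PinPt{k}\vsht)|_{\partial\P}$ and $\SP(\vsht,\vsht)^{1/2}$, all bounded by $\localseminorm{\vsht}$ via Lemma \ref{lem:local.terms.boundedness}, while the data factors ($\nabla(\us-\Ihkt\us)$, $\hP\Delta(\usR-\PinP{k}\usR)$, $\hP^{1/2}\nabla(\usR-\PinP{k}\usR)\cdot\nor$, and the stabilisation consistency of Lemma \ref{lem:stab.consistency}) are each of order $\hP^k\snorm{\HS{k+1}(\P)}{\usR}$. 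To salvage your route you would have to either prove the missing inverse-type estimates for $\Vhkt(\P)$ (a substantial separate task) or rearrange $T_1$ and $T_2$ so that $\vsht$ only enters through the computable quantities above --- which essentially reproduces the paper's manipulation.
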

\begin{proof}
    Applying the definition \eqref{eq:aht.def} of $\ash$ and the orthogonality properties of $\PinPt{k}$, it holds that
    \begin{equation}\label{eq:consistency.ash.Ihkt}
        \ash(\Ihkt \us, \vsht) = \sum_{\P\in\Th} \left(\int_{\P}\nabla\Ihkt\us\cdot\nabla\PinPt{k}\vsht\dx + \SP(\Ihkt \us, \vsht)\right).
    \end{equation}
    Consider now, on each element $\P\in\Th$, 
    \begin{equation}\label{eq:consistency.proof.1}
    \begin{aligned}
        -\int_\P \Delta \us \PiDP{l}\vsht\dx ={}& -\int_\P \Delta \us (\PiDP{l}\vsht - \PinPt{k}\vsht) \dx + \int_\P \nabla \us \cdot \nabla \PinPt{k}\vsht \dx \\
        &- \Hhalfprod{\nabla \us \cdot \nor,\PinPt{k}\vsht}{\partial\P},
    \end{aligned}
    \end{equation}
    which follows by adding and subtracting the term $\int_T \Delta \us \PinPt{k}\vsht \dx$ and integrating by parts. 
    By the continuity of the virtual function \(\vsht\) and of the normal fluxes $\nabla\us\cdot\nor$ across the mesh edges and \(\vsht|_{\partial\Omega}=0\), it holds that 
    \begin{equation}\label{eq:consistency.proof.bc}
        0
        = \Hhalfprod{\nabla \us \cdot \nor,\vsht}{\partial\Omega}
        = \sum_{\P\in\Th} \Hhalfprod{\nabla \us \cdot \norP,\vsht}{\partial\P}.
    \end{equation}
    Adding \eqref{eq:consistency.proof.bc} to \eqref{eq:consistency.proof.1} and summing over all elements $\P\in\Th$, it follows that
    \begin{multline}\label{eq:consistency.proof.2}
        -\sum_{\P\in\Th}\int_\P \Delta \us \PiDP{l}\vsht\dx = \sum_{\P\in\Th}\bigg[-\int_\P \Delta \us (\PiDP{l}\vsht - \PinPt{k}\vsht) \dx + \int_\P \nabla \us \cdot \nabla \PinPt{k}\vsht \dx  \\ + \Hhalfprod{\nabla \us \cdot \nor,\vsht - \PinPt{k}\vsht}{\partial\P}\bigg].
    \end{multline}
    Let $\zs \in \PSt{k}(\P)$. Since $\Delta \zs\in\Delta\PSt{k}(\P)\subset \PSD{l}(\P)$ and $\PiDP{l}$ is the $L^2$-projector on that space, we have
    \[
     \int_\P \Delta \zs (\PiDP{l}\vsht - \PinPt{k}\vsht) \dx = \int_\P \Delta \zs (\vsht - \PinPt{k}\vsht) \dx.
    \]
    Hence, integrating by part and using the definition \eqref{eq:PinPt:A} of $\PinPt{k}$,
    \[
        -\int_\P \Delta \zs (\PiDP{l}\vsht - \PinPt{k}\vsht) \dx + \Hhalfprod{\nabla \zs \cdot \nor,\vsht - \PinPt{k}\vsht}{\partial\P}= \int_\P \nabla \zs \cdot\nabla (\vsht - \PinPt{k}\vsht) \dx= 
        0.
    \]
    Applying this to $z=\psi+\PinP{k}\usR\in\PSt{k}(\P)$, we can cancel in the following expression the singular component $\psi$ of $u$ and introduce the term $\PinP{k}\usR$ to write
    \begin{multline*}
        -\int_\P \Delta \us (\PiDP{l}\vsht - \PinPt{k}\vsht) \dx + \Hhalfprod{\nabla \us \cdot \nor,\vsht - \PinPt{k}\vsht}{\partial\P} \\ = -\int_\P \Delta (\usR - \PinP{k}\usR) (\PiDP{l}\vsht - \PinPt{k}\vsht) \dx + \Hhalfprod{\nabla (\usR - \PinP{k}\usR) \cdot \nor,\vsht - \PinPt{k}\vsht}{\partial\P}.
    \end{multline*}
    Substituting back into \eqref{eq:consistency.proof.2} yields 
    \begin{multline}\label{eq:consistency.proof.3}
        -\sum_{\P\in\Th}\int_\P \Delta \us \PiDP{l}\vsht\dx = \sum_{\P\in\Th}\bigg[ \int_\P \nabla \us \cdot \nabla \PinPt{k}\vsht \dx \\ -\int_\P \Delta (\usR - \PinP{k}\usR) (\PiDP{l}\vsht - \PinPt{k}\vsht) \dx + \int_{\partial\P} \nabla (\usR - \PinP{k}\usR) \cdot \nor(\vsht - \PinPt{k}\vsht)\dS\bigg],
    \end{multline}
    where we have replaced the $\Hhalfprod{\cdot, \cdot}{\partial\P}$ duality product with the integral notation due to the regularity of $\usR$.
    Recalling the definition \eqref{eq:RESD} of $ \RESD(\us;\vsht)$ and invoking \eqref{eq:approx.rhs} together with $f=-\Delta u$, we infer
    \begin{align*}
        \RESD(\us;{}&\vsht) =
          -\sum_{\P\in\Th}\int_\P\Delta u\PiDP{l}\vsht\dx - \ash(\Ihkt\us, \vsht)
        \\
        ={}&\sum_{\P\in\Th}\bigg[ \int_\P \nabla (\us - \Ihkt\us) \cdot \nabla \PinPt{k}\vsht \dx - \SP(\Ihkt \us, \vsht) \\ 
        &-\int_\P \Delta (\usR - \PinP{k}\usR) (\PiDP{l}\vsht - \PinPt{k}\vsht) \dx + \int_{\partial\P} \nabla (\usR - \PinP{k}\usR) \cdot \nor(\vsht - \PinPt{k}\vsht)\dS\bigg],
    \end{align*}
    where the second equality follows from \eqref{eq:consistency.proof.3} and \eqref{eq:consistency.ash.Ihkt}.
    By applying Cauchy--Schwarz inequalities to each of the terms and \eqref{eq:local.terms.boundedness} we obtain
    \begin{align*}
     \RESD(\us;\vsht) \lesssim \sum_{\P\in\Th}\localseminorm{\vsht}\bigg[{}& \norm{\P}{\nabla (\us - \Ihkt\us)} + \SP(\Ihkt \us, \Ihkt \us)^\frac12 \\
     & + \hP\norm{\P}{\Delta (\usR - \PinP{k}\usR)} + \hP^\frac12\norm{\partial\P}{\nabla (\usR - \PinP{k}\usR) \cdot \nor} \bigg].
    \end{align*}
    It follows from the definition of $\Ihkt$ that $\us - \Ihkt\us = \usR - \Ihk\usR$ and thus applying the approximation properties of $\Ihk$ \cite[Lemma 2.23]{Brenner-Guan-Sung:2017:SEV} yields the bound
    \[
        \norm{\P}{\nabla (\us - \Ihkt\us)} \lesssim \hP^k\snorm{\HS{k+1}(\P)}{\usR}.
    \]
    Combining with the consistency \eqref{eq:stab.consistency} of $\SP$ and the approximation properties of $\PinP{k}$, it is clear that
    \[
        \RESD(\us;\vsht) \lesssim \sum_{\P\in\Th}\localseminorm{\vsht}\hP^k\snorm{\HS{k+1}(\P)}{\usR}.
    \]
    The result follows from a Cauchy--Schwarz inequality and the bound $\hP\le\hh$.
\end{proof}

The proof of Theorem \ref{thm:energy.error} now follows trivially.

\begin{proof}[ of Theorem \ref{thm:energy.error}]
    It follows from the coercivity property \eqref{eq:ah.norm.equivalence} and the Third Strang Lemma \cite[Lemma A.7]{hho-book} that
    \[
        \discretenorm{\ush - \Ihkt\us} \lesssim \sup_{\discretenorm{\vsht}=1} \abs{\RESD(\us;\vsht)}.
    \]
    The proof then follows from the residual error \eqref{eq:RESD:error}.
\end{proof}

\begin{proof}[ of Theorem \ref{thm:energy.error.continuous}]
	It follows from introducing $\PinPt{k}\us$ and $\PinPt{k}\Ihkt\us$ on each element $\P\in\Th$ and applying a triangle inequality that
	\begin{multline}\label{eq:energy.error.continuous.proof.1}
		\sum_{\P\in\Th}\snorm{\HS{1}(\P)}{\PinPt{k}\ush - \us}^2 \\
		\lesssim 
		\sum_{\P\in\Th}\snorm{\HS{1}(\P)}{\PinPt{k}(\ush - \Ihkt\us)}^2 + \sum_{\P\in\Th}\snorm{\HS{1}(\P)}{\PinPt{k}(\Ihkt\us - \us)}^2 + \sum_{\P\in\Th}\snorm{\HS{1}(\P)}{\PinPt{k}\us - \us}^2.
	\end{multline}
	By definition of $\Ihkt$, we have $\Ihkt\us - \us=\Ihk\usR - \usR$	and thus a bound on the second term in the right-hand side of \eqref{eq:energy.error.continuous.proof.1} follows from the $H^1(\P)$-boundedness of $\PinPt{k}$ and the approximation properties \cite[Lemma 2.23]{Brenner-Guan-Sung:2017:SEV} of $\Ihk$. After noticing that $\PinPt{k}\us - \us=\PinPt{k}\usR - \usR$ since $\PinPt{k}\psi=\psi$, the last term is bounded due to the optimal approximation properties of $\PinPt{k}$ in the $H^1(\P)$-seminorm. To bound the first term, note that
	\[
		\sum_{\P\in\Th}\snorm{\HS{1}(\P)}{\PinPt{k}(\ush - \Ihkt\us)}^2 \lesssim \discretenorm{\ush - \Ihkt\us}^2
	\]
	and the result follows from Theorem \ref{thm:energy.error}.
\end{proof}



\section{Numerical experiments}
\label{sec5:numerical}

    
  
  


The XVEM method described in Section \ref{sec:XVEM:formulation} is implemented using the open-source C++ library \texttt{PolyMesh} (\url{https://github.com/liamyemm/PolyMesh}), using linear algebra tools from the \texttt{Eigen3} library (see \url{http://eigen.tuxfamily.org}). We focus here on testing the method on domains possessing fractures or re-entrant corners. For both these cases, the exact solution is not expected to be $\HS{2}$, and so a classical virtual element method is expected to converge sub-optimally for all $k$.

Enriching elements and edges with singular functions far from the location of the singularity can cause severe ill-conditioning due to these functions being well-approximated by polynomials. This is typically mitigated through local enrichment \cite{artioli.mascotto:2021:enrichment, yemm:2022:design}. However, the analysis of the method described in this paper requires splitting the interpolant into a singular part, and an classical VEM interpolant on the regular part (see Remarks \ref{rem:conforming.psi} and \ref{rem:alternate.interpolator}). This requires the enrichment function to be globally $H^1$-conforming. As such, a sufficiently smooth cut-off function would be required to facilitate local enrichment. 

However, numerical tests suggest that removing the additional DOFs far from the singularity still leads to a valid scheme. In particular, we can enrich the local spaces on the elements and their edges only if the element intersects the disk of radius $\gamma>0$ centred at the singularity. For all tests the local enrichment parameter is taken as $\gamma = 0.15$. While such an enrichment is not accounted for by the analysis in this paper, the tests show that locally enriching in this manner achieves optimal results.

\subsection{Fractured domain}

Consider the fractured domain $\Omega=(-1, 1)^2\backslash\{(x,y):y=0, x>0\}$. The solution to problem \eqref{eq:Poisson} posed in this domain is expected to contain a singularity at the fracture tip of the form \cite{Grisvard:1985}
\[
    \psi(r, \theta) = r^\frac12 \sin(\frac12 \theta).
\]
As such, we test with an exact solution
\[
    u = \sin(\pi x) \sin (\pi y) + \psi
\]
and enrich the local spaces with the function $\psi$. 

A sequence of Cartesian meshes of the domain $\Omega$ is considered. The parameters of these meshes are displayed in Table \ref{table:cartesian.data} and two meshes are plotted in Figure \ref{fig:meshes} showing the fracture and the local enrichment scheme.

\begin{table}[!ht]
	\centering
	\pgfplotstableread{cartesian_mesh_data.dat}\loadedtable
	\pgfplotstabletypeset
	[
	columns={MeshTitle, MeshSize,NbCells,NbEdges,NbVertices}, 
	columns/MeshTitle/.style={column name=Mesh \#},
	columns/MeshSize/.style={column name=\(h\),/pgf/number format/.cd,fixed,zerofill,precision=4},
	columns/NbCells/.style={column name=Nb. Elements},
	columns/NbEdges/.style={column name=Nb. Edges},
	columns/NbVertices/.style={column name=Nb. Vertices},
	every head row/.style={before row=\toprule,after row=\midrule},
	every last row/.style={after row=\bottomrule} 
	]\loadedtable
	\caption{Mesh data used for the fractured domain test}
	\label{table:cartesian.data}
\end{table}

\begin{figure}[!ht]
	\centering
	\includegraphics[width=0.4\textwidth]{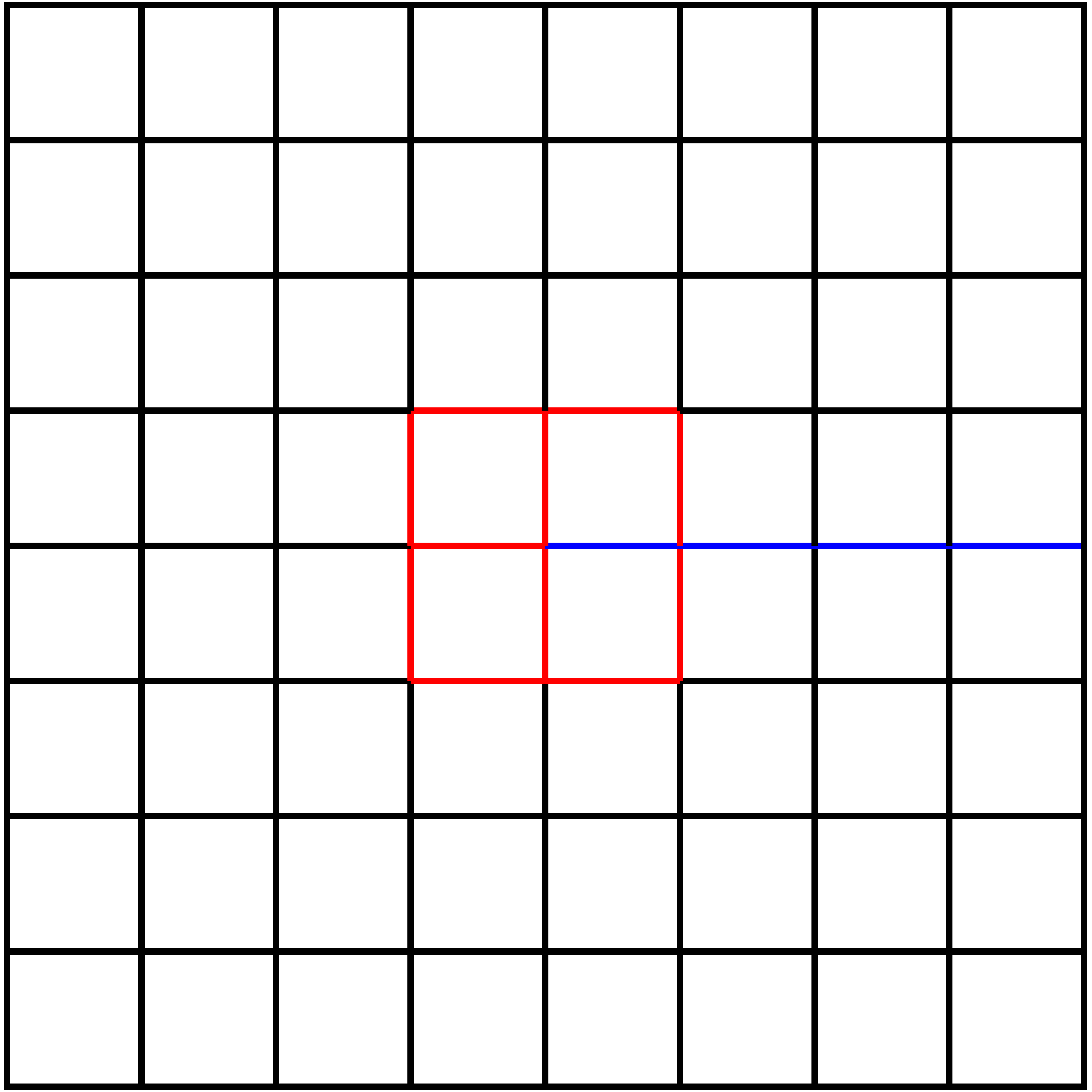}\hspace{0.5cm}
	\includegraphics[width=0.4\textwidth]{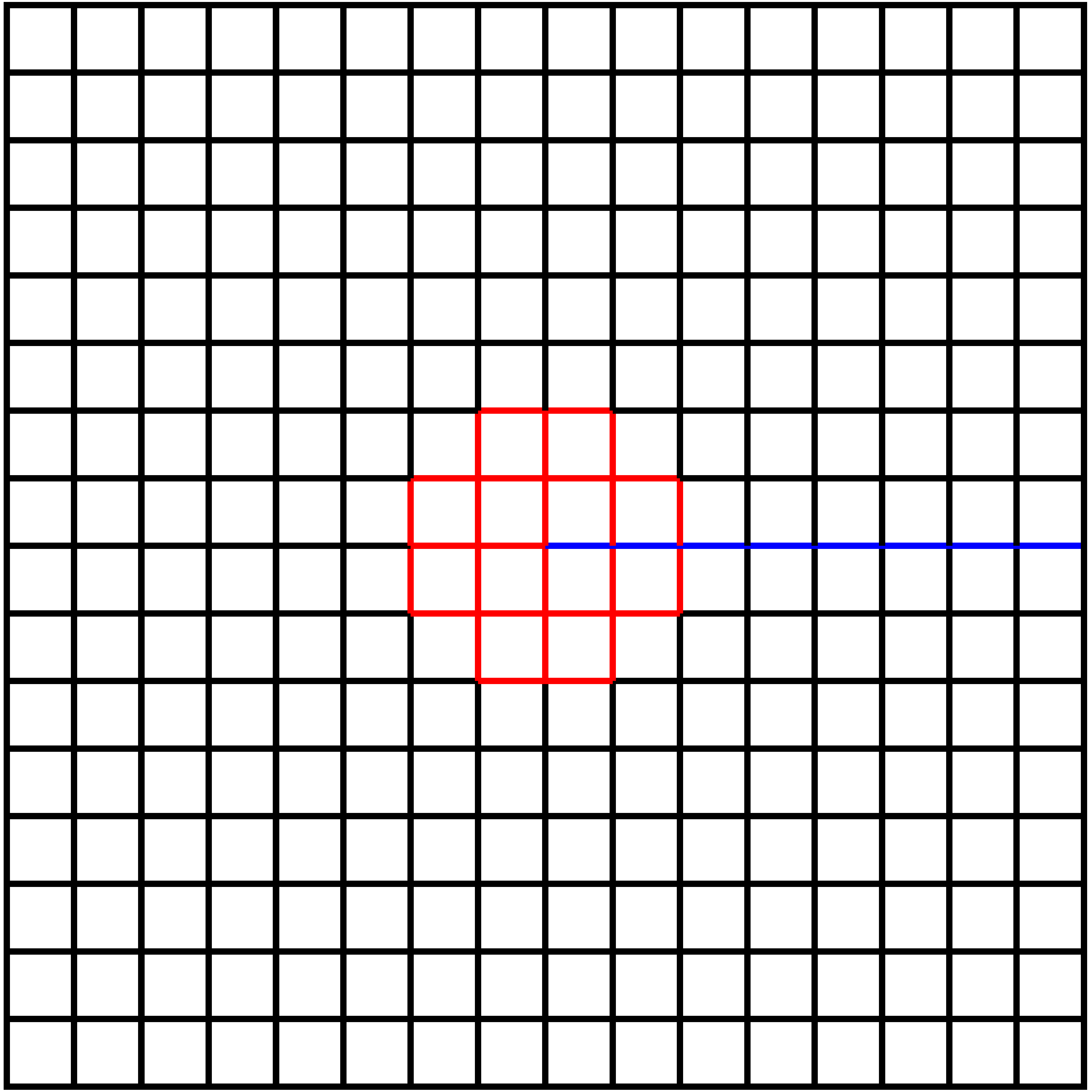} 
	\caption{Plots of meshes 2 and 3 used in the fractured domain test. The fracture is coloured blue and enriched elements are coloured red.}
	\label{fig:meshes}
\end{figure}

The solution to the discrete problem \eqref{eq:discrete.problem} is denoted by $\usht$ and the solution to the continuous problem \eqref{eq:Poisson} is denoted by $\us$. The accuracy of the scheme is determined by the following relative errors (respectively measuring the error in an $L^2$-like norm and an $H^1$-like seminorm):
\[
    E_{0,h}^2 := \frac{\sum_{\P\in\Th}\norm{\P}{\PinPt{k}(\usht - \us)}^2}{\sum_{\P\in\Th}\norm{\P}{\PinPt{k}\us}^2}\quad\textrm{and}\quad E_{1,h}^2 := \frac{\sum_{\P\in\Th}\snorm{\HS{1}(\P)}{\PinPt{k}(\usht - \us)}^2}{\sum_{\P\in\Th}\snorm{\HS{1}(\P)}{\PinPt{k}\us}^2}.
\]
By Theorem \ref{thm:energy.error.continuous}, for the \XVEM{} the error $E_{1,h}$ is expected to decay as $\mathcal O(h^k)$ despite the presence of the singularity in $\us$; we have not studied the convergence in $L^2$-norm but, classically, we would expect $E_{0,h}$ to decay as $\mathcal O(h^{k+1})$. On the contrary, for the non-enriched VEM these errors will be strongly limited by the lack of regularity of $\us$.

As enriching the virtual element method increases the number of degrees of freedom, it is most appropriate to plot the error against the degrees of freedom. After static condensation is performed, and the boundary degrees of freedom are fixed, the only remaining degrees of freedom are those corresponding to the internal edges and vertices. As the number of DOFs grows linearly with the number of elements, which is of order $\sim h^{-2}$ for our meshes, the $\mathcal O(h^k)$ error estimate predicted for the error in ($H^1$-like) energy norm by \eqref{eq:energy.error} translates to $\mathcal O((\sharp \text{DOFs})^{-k/2})$ in our graphs.

In Figure \ref{fig:fracture}, the error of the scheme is plotted against these remaining DOFs for a non-enriched, globally enriched and locally enriched scheme. The ill-conditioning of the globally enriched method is apparent due to the scheme failing on Meshes 5 \& 6 when $k=2,3$. Moreover, for a given mesh and polynomial degree $k$, the globally enriched scheme typically has a worse $H^1$ error than the locally enriched scheme, suggesting that ill-conditioning is plaguing the scheme. When $k=1, 2$, the non-enriched scheme converges optimally in $L^2$ error, however the convergence rate is sub-optimal in $H^1$ error. For $k=2$ the non-enriched scheme converges sub-optimally in both $L^2$ and $H^1$ error and is significantly outperformed by both enriched schemes. The locally enriched scheme converges optimally in all cases and is clearly the best performing scheme.

\begin{remark}[Rate of convergence in $L^2$ norm]
For $k=1,3$ we note an optimal convergence rate in $L^2$-norm, with an error decaying as $\mathcal O((\sharp\text{DOFs})^{-(k+1)/2})$. The rate of convergence in $L^2$-norm for $k=2$ however appears to be sub-optimal, and not better than the rate for the $H^1$-norm. Given our choice of (depleted) element unknowns in the VEM space, the VEM scheme for $k=2$ can be compared with the (depleted) HHO scheme for $(k,\ell)=(1,0)$ \cite[Chapter 5.1]{hho-book}, for which the sub-optimal convergence in $L^2$-norm is a well documented phenomenon (see \cite[Remark 5.17 and Section 5.1.8]{hho-book}).
\end{remark}

\begin{remark}[Managing ill-conditioning]
As discussed above, local enrichment leads to algebraic systems (both in computations of local operators, and for the final fully coupled system) that are much better conditioned than those obtained via global enrichment.

Another avenue to mitigate the ill-conditioning of the system obtained via global enrichment would be through the use of a pre-conditioner. Our current implementation solves the global linear system through a direct application of the BiCGStab solver from the Eigen3 library; it is expected that classical pre-conditioners, such as the BDDC method (see \cite{BDDC-VEM} for the application of this method to standard VEM), could lead to better numerical outcomes for the globally enriched scheme.
\end{remark}

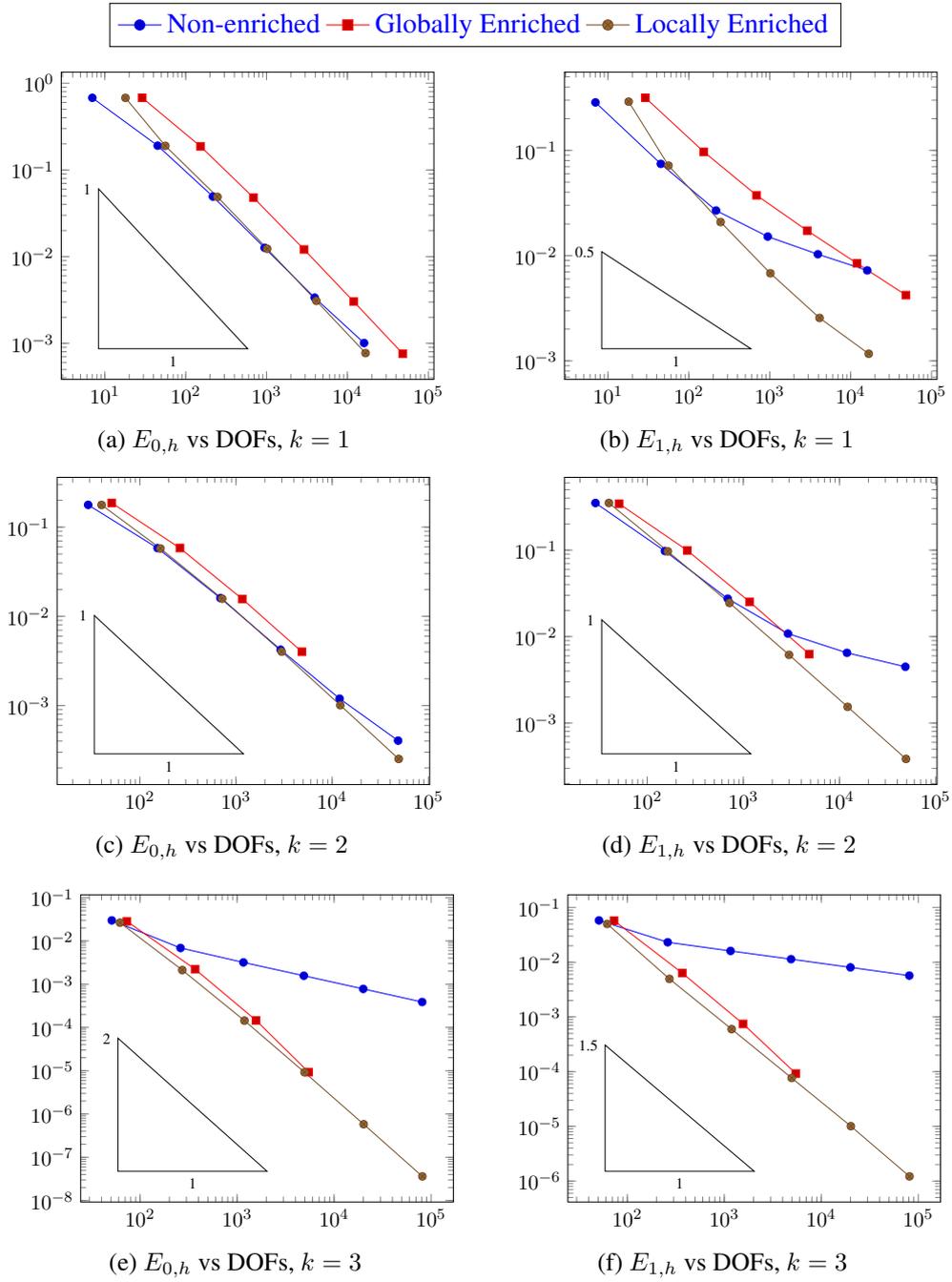
\begin{figure}[!ht]
	\centering
	\ref{legend_name}
	\vspace{0.25cm}\\
	\subcaptionbox{$E_{0,h}$ vs DOFs, $k=1$}
	{
		\begin{tikzpicture}[scale=0.75]
			\begin{loglogaxis}[ legend columns=3, legend to name=legend_name ]
				\legend{Non-enriched, Globally Enriched, Locally Enriched};
				\addplot table[x=DOFs,y=L2Error] {orthonormalised_fracture_nonenrich_k1.dat};
				\addplot table[x=DOFs,y=L2Error] {orthonormalised_fracture_enrich_k1.dat};
				\addplot table[x=DOFs,y=L2Error] {orthonormalised_fracture_locally_enrich_015_k1.dat};
				\reverseLogLogSlopeTriangle{0.50}{0.4}{0.1}{1}{black};
			\end{loglogaxis}
		\end{tikzpicture}
	}
	\hspace{0.5cm}\subcaptionbox{$E_{1,h}$ vs DOFs, $k=1$}
	{
		\begin{tikzpicture}[scale=0.75]
			\begin{loglogaxis}
				\addplot table[x=DOFs,y=H1Error] {orthonormalised_fracture_nonenrich_k1.dat};
				\addplot table[x=DOFs,y=H1Error] {orthonormalised_fracture_enrich_k1.dat};
				\addplot table[x=DOFs,y=H1Error] {orthonormalised_fracture_locally_enrich_015_k1.dat};
				\reverseLogLogSlopeTriangle{0.50}{0.4}{0.1}{0.5}{black};
			\end{loglogaxis}
		\end{tikzpicture}
	}
	\vspace{0.25cm}\\
	\subcaptionbox{$E_{0,h}$ vs DOFs, $k=2$}
	{
		\begin{tikzpicture}[scale=0.75]
			\begin{loglogaxis}
				\addplot table[x=DOFs,y=L2Error] {orthonormalised_fracture_nonenrich_k2.dat};
				\addplot table[x=DOFs,y=L2Error] {orthonormalised_fracture_enrich_k2.dat};
				\addplot table[x=DOFs,y=L2Error] {orthonormalised_fracture_locally_enrich_015_k2.dat};
				\reverseLogLogSlopeTriangle{0.50}{0.4}{0.1}{1}{black};
			\end{loglogaxis}
		\end{tikzpicture}
	}
	\hspace{0.5cm}\subcaptionbox{$E_{1,h}$ vs DOFs, $k=2$}
	{
		\begin{tikzpicture}[scale=0.75]
			\begin{loglogaxis}
				\addplot table[x=DOFs,y=H1Error] {orthonormalised_fracture_nonenrich_k2.dat};
				\addplot table[x=DOFs,y=H1Error] {orthonormalised_fracture_enrich_k2.dat};
				\addplot table[x=DOFs,y=H1Error] {orthonormalised_fracture_locally_enrich_015_k2.dat};
				\reverseLogLogSlopeTriangle{0.50}{0.4}{0.1}{1}{black};
			\end{loglogaxis}
		\end{tikzpicture}
	}
	\vspace{0.25cm}\\
	\subcaptionbox{$E_{0,h}$ vs DOFs, $k=3$}
	{
		\begin{tikzpicture}[scale=0.75]
			\begin{loglogaxis}
				\addplot table[x=DOFs,y=L2Error] {orthonormalised_fracture_nonenrich_k3.dat};
				\addplot table[x=DOFs,y=L2Error] {orthonormalised_fracture_enrich_k3.dat};
				\addplot table[x=DOFs,y=L2Error] {orthonormalised_fracture_locally_enrich_015_k3.dat};
				\reverseLogLogSlopeTriangle{0.50}{0.4}{0.1}{2}{black};
			\end{loglogaxis}
		\end{tikzpicture}
	}
	\hspace{0.5cm}\subcaptionbox{$E_{1,h}$ vs DOFs, $k=3$}
	{
		\begin{tikzpicture}[scale=0.75]
			\begin{loglogaxis}
				\addplot table[x=DOFs,y=H1Error] {orthonormalised_fracture_nonenrich_k3.dat};
				\addplot table[x=DOFs,y=H1Error] {orthonormalised_fracture_enrich_k3.dat};
				\addplot table[x=DOFs,y=H1Error] {orthonormalised_fracture_locally_enrich_015_k3.dat};
				\reverseLogLogSlopeTriangle{0.50}{0.4}{0.1}{1.5}{black};
			\end{loglogaxis}
		\end{tikzpicture}
	}
	\caption{Tests on fractured domain}
	\label{fig:fracture}
\end{figure}

\subsection{L-shaped domain, hexagonal meshes}\label{sec:Lshape.1}

Consider the fractured domain $\Omega=(-1, 1)^2\backslash [0, 1)^2$. The solution in this domain has a singularity located at the re-entrant corner of the form
\[
    \psi(r, \theta) = r^\frac23 \sin(\frac23 (\theta - \frac{\pi}{2})).
\]
Analogous to the fractured domain test, we consider an exact solution of the form
\[
    u = \sin(\pi x) \sin (\pi y) + \psi
\]
and enrich the local spaces with the function $\psi$. 

A sequence of hexagonal meshes of the domain $\Omega$ is considered. The parameters of these meshes are displayed in Table \ref{table:hexa.data} and two meshes are plotted in Figure \ref{fig:hexa.meshes} showing the enrichment scheme considered for the locally enriched tests.

\begin{table}[!ht]
	\centering
	\pgfplotstableread{hexa_mesh_data.dat}\loadedtable
	\pgfplotstabletypeset
	[
	columns={MeshTitle, MeshSize,NbCells,NbEdges,NbVertices}, 
	columns/MeshTitle/.style={column name=Mesh \#},
	columns/MeshSize/.style={column name=\(h\),/pgf/number format/.cd,fixed,zerofill,precision=4},
	columns/NbCells/.style={column name=Nb. Elements},
	columns/NbEdges/.style={column name=Nb. Edges},
	columns/NbVertices/.style={column name=Nb. Vertices},
	every head row/.style={before row=\toprule,after row=\midrule},
	every last row/.style={after row=\bottomrule} 
	]\loadedtable
	\caption{Mesh data used for the L-shaped domain test}
	\label{table:hexa.data}
\end{table}

\begin{figure}[!ht]
	\centering
	\includegraphics[width=0.4\textwidth]{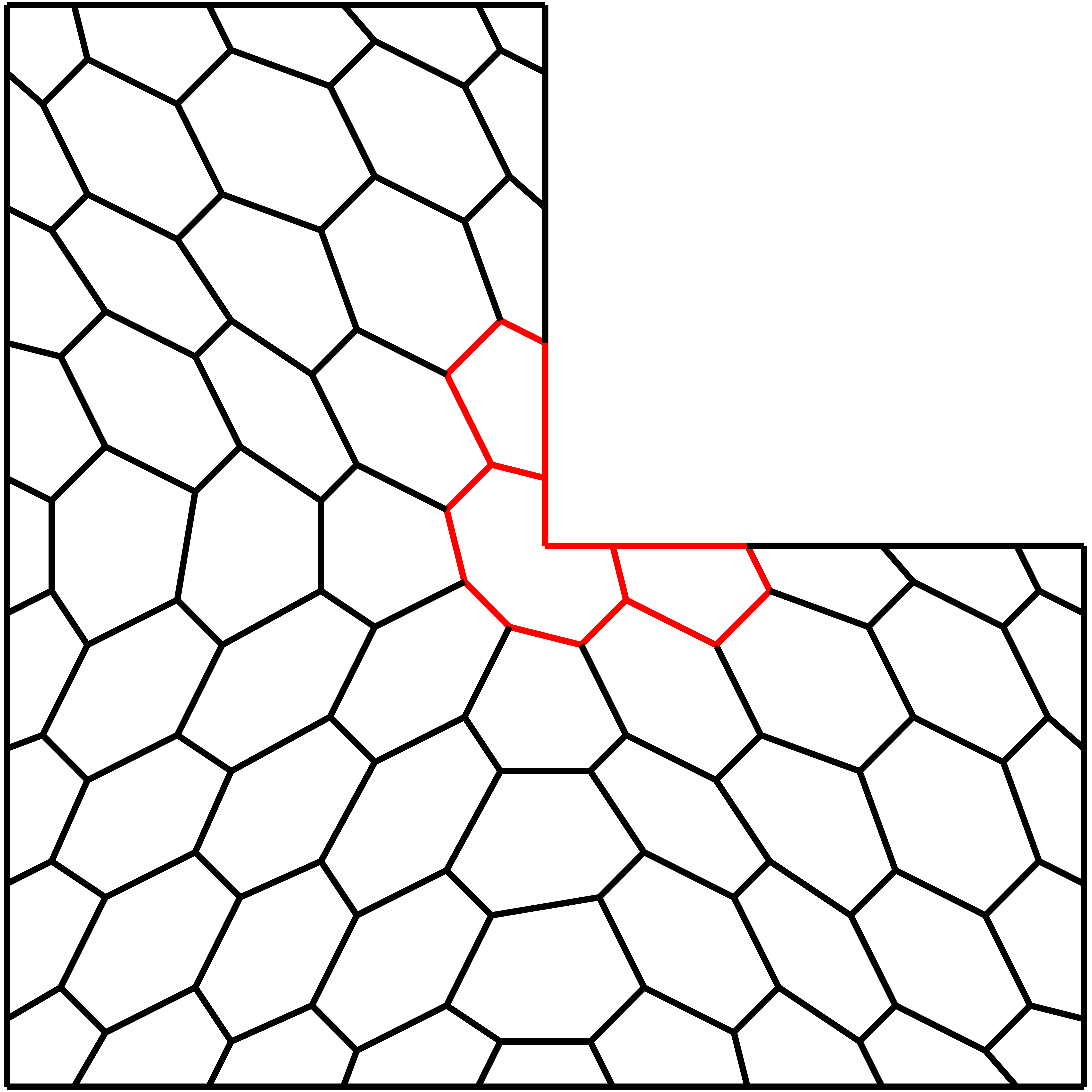}\hspace{0.5cm}
	\includegraphics[width=0.4\textwidth]{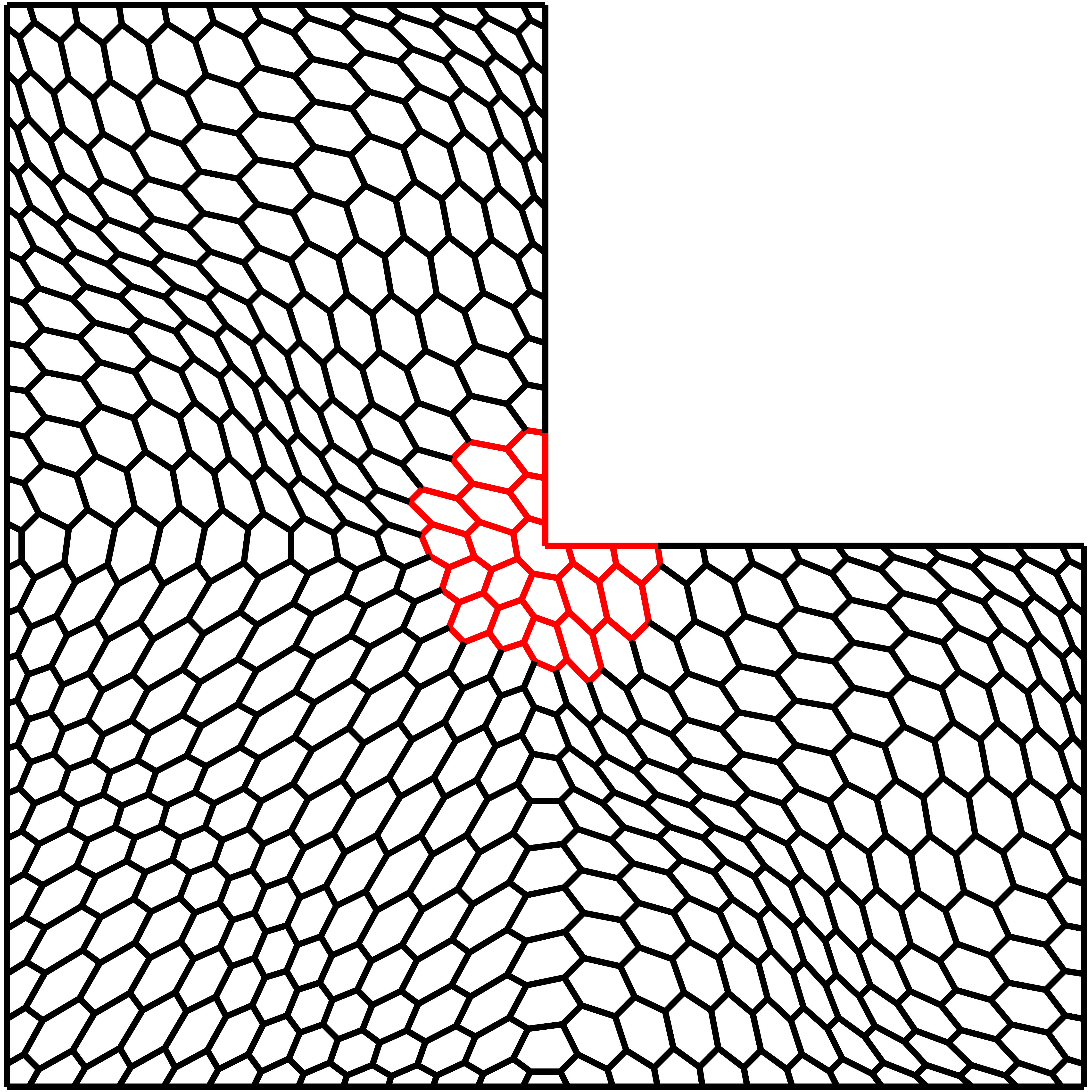} 
	\caption{Plots of meshes 2 and 4 used in the L-shaped domain test of Section \ref{sec:Lshape.1}. The enriched elements are coloured red.}
	\label{fig:hexa.meshes}
\end{figure}

Figure \ref{fig:lshape} illustrates the error of the method plotted against the remaining internal DOFs after static condensation for a non-enriched, globally enriched, and locally enriched scheme. As with the fractured domain, the globally enriched approach exhibits evident ill-conditioning, as observed by its failure on Mesh 7 when $k=1$, Mesh 6 \& 7 when $k=2$ and on Mesh 4, 5, 6 \& 7 when $k=3$. Indeed, for $k=1,2$, the error is greater for the globally enriched scheme than for either the non-enriched or locally enriched schemes. As with the fractured domain test, the locally enriched scheme is the best performing and converges optimally in all cases.

\begin{figure}[!ht]
	\centering
	\ref{legend_name}
	\vspace{0.25cm}\\
	\subcaptionbox{$E_{0,h}$ vs DOFs, $k=1$}
	{
		\begin{tikzpicture}[scale=0.75]
			\begin{loglogaxis}
				\addplot table[x=DOFs,y=L2Error] {orthonormalised_Lshape_nonenrich_k1.dat};
				\addplot table[x=DOFs,y=L2Error] {orthonormalised_Lshape_enrich_k1.dat};
				\addplot table[x=DOFs,y=L2Error] {orthonormalised_Lshape_locally_enrich_015_k1.dat};
				\reverseLogLogSlopeTriangle{0.50}{0.4}{0.1}{1}{black};
			\end{loglogaxis}
		\end{tikzpicture}
	}\hspace{0.5cm}\subcaptionbox{$E_{1,h}$ vs DOFs, $k=1$}
	{
		\begin{tikzpicture}[scale=0.75]
			\begin{loglogaxis}
				\addplot table[x=DOFs,y=H1Error] {orthonormalised_Lshape_nonenrich_k1.dat};
				\addplot table[x=DOFs,y=H1Error] {orthonormalised_Lshape_enrich_k1.dat};
				\addplot table[x=DOFs,y=H1Error] {orthonormalised_Lshape_locally_enrich_015_k1.dat};
				\reverseLogLogSlopeTriangle{0.50}{0.4}{0.1}{0.5}{black};
			\end{loglogaxis}
		\end{tikzpicture}
	}
	\vspace{0.25cm}\\
	\subcaptionbox{$E_{0,h}$ vs DOFs, $k=2$}
	{
		\begin{tikzpicture}[scale=0.75]
			\begin{loglogaxis}
				\addplot table[x=DOFs,y=L2Error] {orthonormalised_Lshape_nonenrich_k2.dat};
				\addplot table[x=DOFs,y=L2Error] {orthonormalised_Lshape_enrich_k2.dat};
				\addplot table[x=DOFs,y=L2Error] {orthonormalised_Lshape_locally_enrich_015_k2.dat};
				\reverseLogLogSlopeTriangle{0.50}{0.4}{0.1}{1}{black};
			\end{loglogaxis}
		\end{tikzpicture}
	}
	\hspace{0.5cm}\subcaptionbox{$E_{1,h}$ vs DOFs, $k=2$}
	{
		\begin{tikzpicture}[scale=0.75]
			\begin{loglogaxis}
				\addplot table[x=DOFs,y=H1Error] {orthonormalised_Lshape_nonenrich_k2.dat};
				\addplot table[x=DOFs,y=H1Error] {orthonormalised_Lshape_enrich_k2.dat};
				\addplot table[x=DOFs,y=H1Error] {orthonormalised_Lshape_locally_enrich_015_k2.dat};
				\reverseLogLogSlopeTriangle{0.50}{0.4}{0.1}{1}{black};
			\end{loglogaxis}
		\end{tikzpicture}
	}
	\vspace{0.25cm}\\
	\subcaptionbox{$E_{0,h}$ vs DOFs, $k=3$}
	{
		\begin{tikzpicture}[scale=0.75]
			\begin{loglogaxis}
				\addplot table[x=DOFs,y=L2Error] {orthonormalised_Lshape_nonenrich_k3.dat};
				\addplot table[x=DOFs,y=L2Error] {orthonormalised_Lshape_enrich_k3.dat};
				\addplot table[x=DOFs,y=L2Error] {orthonormalised_Lshape_locally_enrich_015_k3.dat};
				\reverseLogLogSlopeTriangle{0.50}{0.4}{0.1}{2}{black};
			\end{loglogaxis}
		\end{tikzpicture}
	}
	\hspace{0.5cm}\subcaptionbox{$E_{1,h}$ vs DOFs, $k=3$}
	{
		\begin{tikzpicture}[scale=0.75]
			\begin{loglogaxis}
				\addplot table[x=DOFs,y=H1Error] {orthonormalised_Lshape_nonenrich_k3.dat};
				\addplot table[x=DOFs,y=H1Error] {orthonormalised_Lshape_enrich_k3.dat};
				\addplot table[x=DOFs,y=H1Error] {orthonormalised_Lshape_locally_enrich_015_k3.dat};
				\reverseLogLogSlopeTriangle{0.50}{0.4}{0.1}{1.5}{black};
			\end{loglogaxis}
		\end{tikzpicture}
	}
	\caption{Tests on L-shaped domain, hexagonal meshes}
	\label{fig:lshape}
\end{figure}
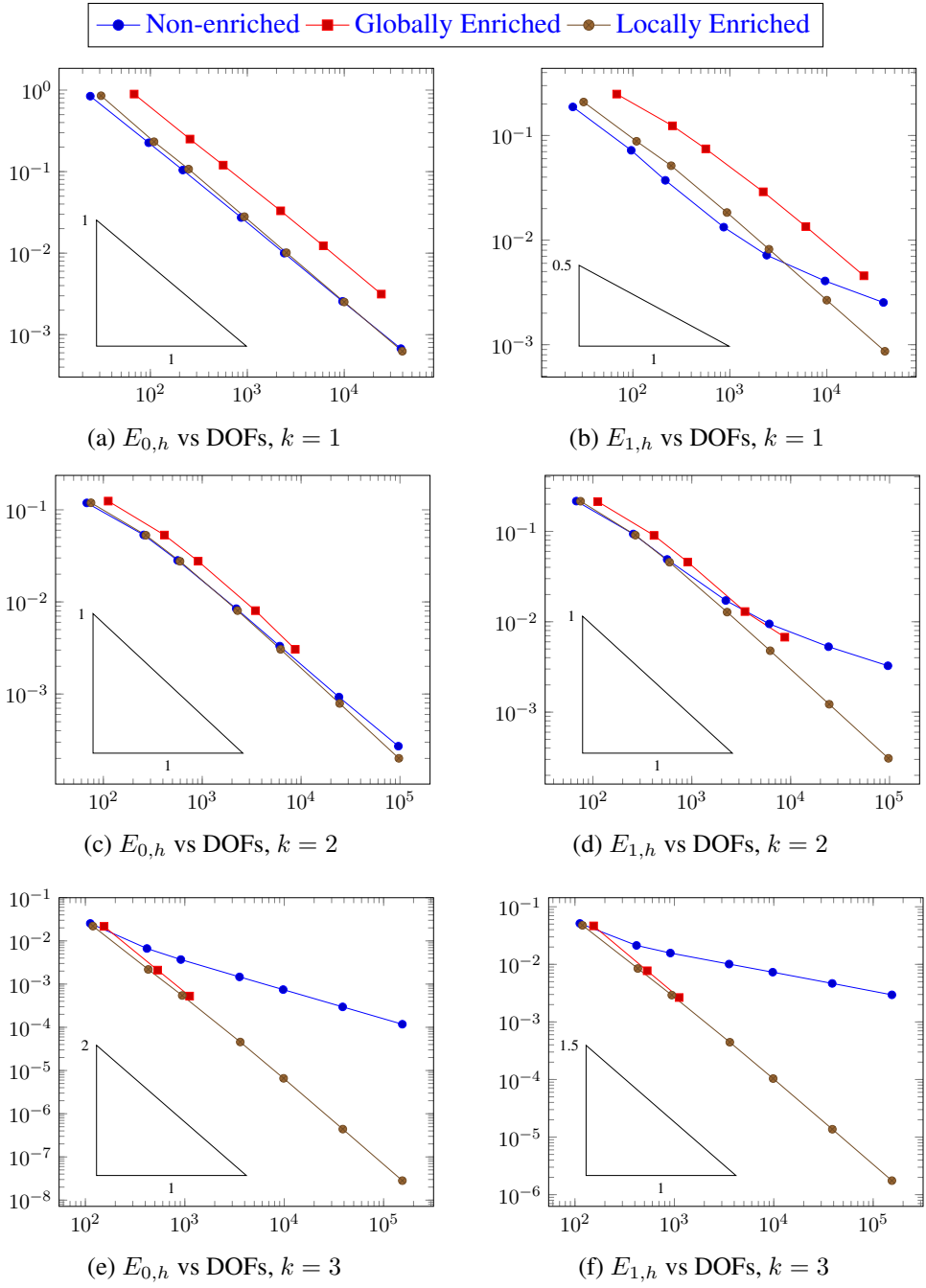

\subsection{L-shaped domain, Cartesian meshes}

In this section, we consider the test case of \cite[Section 5.1.1]{artioli.mascotto:2021:enrichment}. The domain is still an L-shaped one, albeit with the bottom right corner removed: $\Omega=(-1, 1)^2\backslash [0,1) \times (-1, 0]$. The exact solution and singular function are
\[
	u = \sin(\pi x) \sin (\pi y) + \psi\quad\text{ and }\quad\psi(r, \theta) = r^\frac23 \sin(\frac23 \theta).
\]
We run the simulations using uniform Cartesian meshes (see their characteristics in Table \ref{table:cartesian.Lshape.data}).
The convergence graphs for the globally, locally and non-enriched schemes are presented in Figure \ref{fig:lshape-cart}; note that there is no data for the non-enriched scheme with $k=1$ on the coarsest mesh since this mesh only has boundary vertices and, as a consequence, there are zero DOFs for that scheme on this mesh.

As in the previous tests, the locally enriched version outperforms the other two, with the non-enriched version displaying a sub-optimal convergence rate while the globally enriched version shows optimal convergence but larger magnitudes of error. The initial error growths in Figure \ref{fig:lshape-cart} are probably due to the use in the denominator of $E_{1,h}$ of the mesh-dependent quantity $(\sum_\P\snorm{\HS{1}}{\PinPt{k}\us}^2)^{1/2}$, which fluctuates a lot on the first coarse meshes before stabilising to a number close to $\snorm{\HS{1}(\Omega)}{\us}$ on finer meshes.
The ill-conditioning issues of \XVEM{} mentioned above are visible in Figure \ref{fig:lshape-cart} in the sense that the linear solver fails to converge on the finest mesh for the globally enriched scheme with degrees $k=2,3$; here too, we notice that local enrichment leads to a better conditioned scheme.

These results are comparable to the ones obtained with the enriched NCVEM in \cite[Fig. 11]{artioli.mascotto:2021:enrichment}. A direct quantitative comparison is however not feasible, because the errors in VEM and NCVEM are measured in different norms (through the use of scheme-specific reconstruction operators). The ill-conditioning of the globally enriched NCVEM is visible in \cite[Fig. 11]{artioli.mascotto:2021:enrichment} as a blow-up of the error. A mitigation was proposed there using new degrees of freedom corresponding to orthonormal polynomial basis functions on the edges. We also use such edge basis functions, but there is a fundamental difference between conforming VEM and NCVEM (and HHO), which could explain the worse ill-conditioning encountered in the former method: in NCVEM/HHO, the edge degrees of freedom directly give the boundary contributions in the local equations solved to compute the elliptic projectors/potential reconstructions (see \cite[Eqs.~(68) and following]{artioli.mascotto:2021:enrichment} or \cite[Eq.~(2.8)]{yemm:2022:design}), so the orthonormalisation of the bases corresponding to these degrees of freedom has a direct beneficial aspect in the computation of this boundary contribution; on the contrary, in VEM, the boundary contributions to the elliptic projectors involves the whole boundary value of the virtual function (see \eqref{eq:PinPt:computability}), which do not directly correspond to edge degrees of freedom but has to be reconstructed from vertex and edge degrees of freedom; this reconstruction probably reduces the positive impact of having chosen orthonormal bases for polynomial edge spaces.

\begin{table}[!ht]
\centering
\pgfplotstableread{orthonormalised_Lshape_cartesian_nonenriched_k1.dat}\loadedtable
\pgfplotstabletypeset
[
columns={MeshSize,NbCells,NbEdges,NbVertices}, 
columns/MeshSize/.style={column name=\(h\),/pgf/number format/.cd,fixed,zerofill,precision=4},
columns/NbCells/.style={column name=Nb. Elements},
columns/NbEdges/.style={column name=Nb. Edges},
columns/NbVertices/.style={column name=Nb. Vertices},
every head row/.style={before row=\toprule,after row=\midrule},
every last row/.style={after row=\bottomrule} 
]\loadedtable
\caption{Uniform Cartesian meshes for the L-shaped domain}
\label{table:cartesian.Lshape.data}
\end{table}

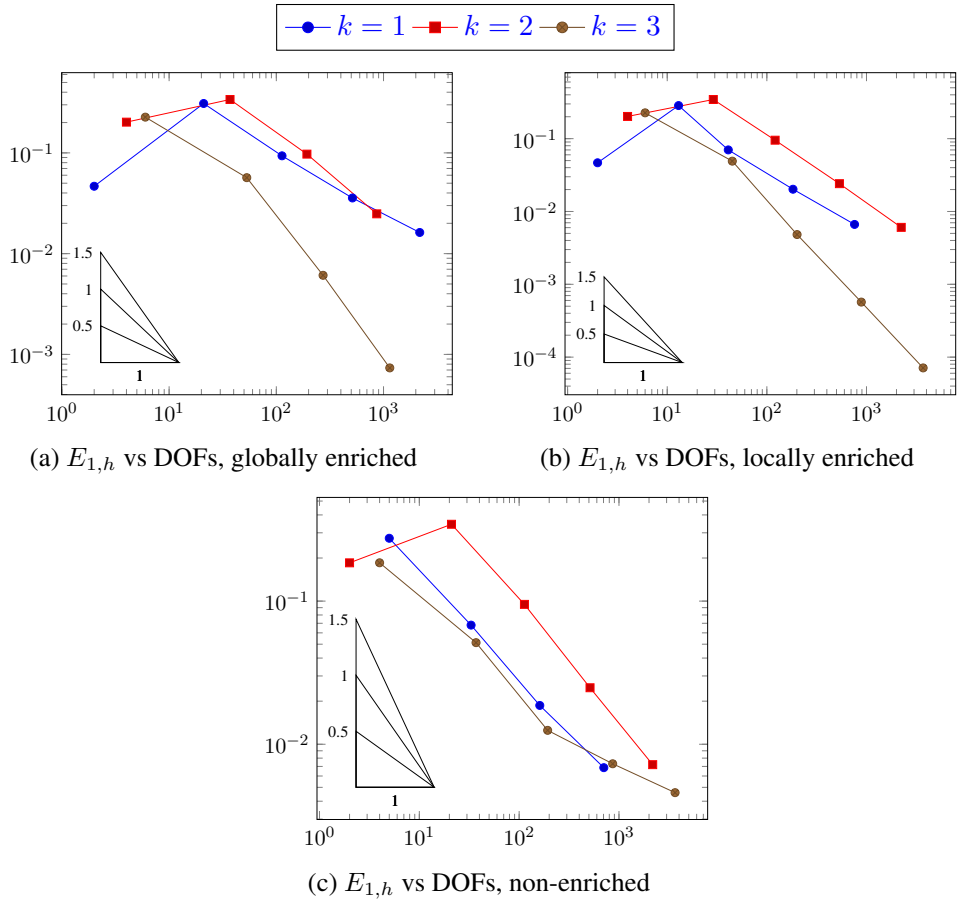
\begin{figure}[!ht]
\centering
\ref{cartesian_legend}
\vspace{0.25cm}\\
\subcaptionbox{$E_{1,h}$ vs DOFs, globally enriched}
{
	\begin{tikzpicture}[scale=0.75]
		\begin{loglogaxis}[ legend columns=3, legend to name=cartesian_legend ]
			\legend{$k=1$,$k=2$,$k=3$};
			\addplot table[x=DOFs,y=H1Error] {orthonormalised_Lshape_cartesian_enriched_k1.dat};
			\addplot table[x=DOFs,y=H1Error] {orthonormalised_Lshape_cartesian_enriched_k2.dat};
			\addplot table[x=DOFs,y=H1Error] {orthonormalised_Lshape_cartesian_enriched_k3.dat};
			\reverseLogLogSlopeTriangle{.3}{0.2}{0.1}{0.5}{black};
			\reverseLogLogSlopeTriangle{.3}{0.2}{0.1}{1}{black};
			\reverseLogLogSlopeTriangle{.3}{0.2}{0.1}{1.5}{black};
		\end{loglogaxis}
	\end{tikzpicture}
}\hspace{0.5cm}\subcaptionbox{$E_{1,h}$ vs DOFs, locally enriched}
{
	\begin{tikzpicture}[scale=0.75]
		\begin{loglogaxis}
			\addplot table[x=DOFs,y=H1Error] {orthonormalised_Lshape_cartesian_enriched_015_k1.dat};
			\addplot table[x=DOFs,y=H1Error] {orthonormalised_Lshape_cartesian_enriched_015_k2.dat};
			\addplot table[x=DOFs,y=H1Error] {orthonormalised_Lshape_cartesian_enriched_015_k3.dat};
			\reverseLogLogSlopeTriangle{.3}{0.2}{0.1}{0.5}{black};
			\reverseLogLogSlopeTriangle{.3}{0.2}{0.1}{1}{black};
			\reverseLogLogSlopeTriangle{.3}{0.2}{0.1}{1.5}{black};
		\end{loglogaxis}
	\end{tikzpicture}
}
\vspace{0.25cm}\\
\subcaptionbox{$E_{1,h}$ vs DOFs, non-enriched}
{
	\begin{tikzpicture}[scale=0.75]
		\begin{loglogaxis}
			\addplot table[x=DOFs,y=H1Error] {orthonormalised_Lshape_cartesian_nonenriched_k1.dat};
			\addplot table[x=DOFs,y=H1Error] {orthonormalised_Lshape_cartesian_nonenriched_k2.dat};
			\addplot table[x=DOFs,y=H1Error] {orthonormalised_Lshape_cartesian_nonenriched_k3.dat};
			\reverseLogLogSlopeTriangle{.3}{0.2}{0.1}{0.5}{black};
			\reverseLogLogSlopeTriangle{.3}{0.2}{0.1}{1}{black};
			\reverseLogLogSlopeTriangle{.3}{0.2}{0.1}{1.5}{black};
		\end{loglogaxis}
	\end{tikzpicture}
}
\caption{Tests on L-shaped domain, Cartesian meshes}
\label{fig:lshape-cart}
\end{figure}


\section{Conclusions}
\label{sec6:conclusions}

In this paper, a novel extension of the virtual element method has been introduced, which achieves consistency on highly generic enrichment spaces. This is applicable to problems posed in complex geometries, where singularities are known to exist near corners and fractures. However, the method also has applicability to any scenario where some component of the solution can be asymptotically obtained. This could include, for example, singular behaviour in the source term, or a highly oscillatory component of the solution. While the method was formulated for the Poisson problem in two dimensions, it has natural extensions to higher dimensions and more general linear elliptic problems.

We performed a complete analysis of the proposed method, proving its capacity to achieve optimal convergence rate in discrete and continuous energy norm. Numerical tests support the theoretical findings in the presence of corner and fracture singularities in both $L^2$- and $H^1$-norm. However, ill-conditioning issues plagues the globally enriched scheme -- something also observed by \cite{yemm:2022:design, artioli.mascotto:2021:enrichment}. The analysis in this paper does not account for the method of local enrichment considered in Section \ref{sec5:numerical}, however, numerical tests show optimal convergence. Providing a robust analysis for the \XVEM{} with discontinuous enrichment spaces is a potential avenue of future research.

\section*{Acknowledgement}
The work of JD was partially supported by the Australian Government
through the Australian Research Council's Discovery Projects funding
scheme (project number DP210103092) and by the European Union (ERC
Synergy, NEMESIS, project number 101115663); views and opinions
expressed are however those of the authors only and do not necessarily
reflect those of the European Union or the European Research Council
Executive Agency. Neither the European Union nor the granting
authority can be held responsible for them.

The work of GM was partially supported by the Laboratory Directed
Research and Development (LDRD) program of Los Alamos National
Laboratory under project number 20220129ER. Los Alamos National
Laboratory is operated by Triad National Security, LLC, for the
National Nuclear Security Administration of U.S. Department of Energy
(Contract No.\ 89233218CNA000001).
GM is a member of the Gruppo Nazionale Calcolo 
Scientifico-Istituto Nazionale di Alta Matematica (GNCS-INdAM).

The work of LY was supported by a Monash University Postgraduate
Publications Award.

\printbibliography


\end{document}